\let\oldref\ref
\newcommand{\B}{{\mathbb B}}
\newcommand{\C}{{\mathbb C}}
\newcommand{\R}{{\mathbb R}}
\newcommand{\N}{{\mathbb N}}
\renewcommand{\S}{{\mathbb S}}
\newcommand{\Z}{{\mathbb Z}}
\newcommand{\ZnZ}{{\Z/N.\Z}}
\newcommand{\RnZ}{{\R/N.\Z}}
\renewcommand{\AA}{{\mathcal A}}
\newcommand{\BB}{{\mathcal B}}
\newcommand{\CC}{{\mathcal C}}
\newcommand{\HH}{{\mathcal H}}
\newcommand{\LL}{{\mathcal L}}
\newcommand{\MM}{{\mathcal M}}
\newcommand{\PP}{{\mathcal P}}
\renewcommand{\SS}{{\mathcal S}}
\newcommand{\TT}{{\mathcal T}}
\renewcommand{\Re}{\text{Re}}
\renewcommand{\Im}{\text{Im}}
\newcommand{\demi}{\text{\textonehalf}}
\newcommand{\vect}[2][]{\mathop{\mathrm{Vect}}_{#1}\{#2\}}
\newcommand{\kernel}[1]{\mathop{\mathrm{Ker}}\left(#1\right)}
\newcommand{\pdtscalaire}[2]{\left(#1\mathbin{|}#2\right)}
\newcommand{\pdthermitien}[2]{\left<#1\mathbin{|}#2\right>}
\newcommand{\opdiag}[1]{\left[\!\left[#1\right]\!\right]}
\newcommand{\carac}{{1\!\!1}}
\newcommand{\parmi}[2]{
  \begin{pmatrix}
    #2 \\ #1
  \end{pmatrix}
}
\newcommand{\signe}{\mathop{\mathrm sgn}}
\newtheorem{lemma}{Lemme}
\newtheorem{thm}{Théorème}
\newtheorem{proposition}{Proposition}
\newtheorem{definition}{Définition}
\newcommand{\ancetre}[1]{{\sc #1}}
\newcommand{\ie}{{\it i.e. }}
\newcommand{\cf}{{\it c.f. }}
\newcommand{\ssi}{\ensuremath{\Leftrightarrow}}
\newcommand{\et}{\ensuremath{\text{ et }}}
\begin{document}
\title{Aire, périmètre et polygones cocycliques}
\author{JC Leger}
\email{jcleger75@gmail.com}%
\date{14 mai 2018}
\maketitle
\tableofcontents
\clearpage
\section{Introduction}

Dans son traité de géométrie, \cite{LEGENDRE14}, Proposition VII, p.134,
\ancetre{Legendre} popularise le fameux théorème isopérimétrique sur les
polygones plans : 

\begin{thm}\label{thm:Legendre}
Soit $N\geq 4$ un entier naturel et $\ell$ un nombre réel  positif. Parmi les polygones à $N$ cotés de périmètre $\ell$, le polygone d'aire maximale est le polygone régulier à $N$ cotés de périmètre $\ell$.
\end{thm}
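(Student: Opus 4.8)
Le plan est d'établir d'abord l'existence d'un maximiseur par compacité, puis de le caractériser en montrant qu'il doit être convexe, équilatéral et inscriptible dans un cercle, ce qui forcera la régularité. Pour l'existence, je paramétrerais un $N$-gone par ses vecteurs-côtés $v_1,\dots,v_N$ soumis à la fermeture $\sum_i v_i=0$ et à la contrainte de périmètre $\sum_i |v_i|=\ell$, le tout modulo rotation. Ces contraintes définissent un ensemble fermé et borné, donc compact, une fois adjointes les configurations dégénérées (aire nulle). L'aire, donnée par la formule du lacet, étant une fonction continue de ces données, elle atteint son maximum; comme il existe (pour tout $N\geq 3$) des polygones d'aire strictement positive et que les dégénérescences sont d'aire nulle, ce maximum est atteint en un polygone non dégénéré.

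Je montrerais ensuite que le maximiseur est convexe: si un sommet crée une concavité, la réflexion de la portion concave par rapport à la droite joignant ses deux voisins préserve les longueurs des côtés, donc le périmètre, mais augmente strictement l'aire, ce qui est absurde. Sur un tel polygone convexe, j'établirais qu'il est équilatéral: si deux côtés consécutifs $[AB]$ et $[BC]$ avaient des longueurs distinctes, je considérerais le triangle $ABC$; à base $[AC]$ fixée et à somme $AB+BC$ fixée, le lieu de $B$ est une ellipse de foyers $A$ et $C$, et l'aire du triangle, proportionnelle à la hauteur issue de $B$, est maximale au sommet du petit axe, c'est-à-dire lorsque $AB=BC$. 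On peut donc égaliser tout couple de côtés adjacents sans changer le périmètre tout en augmentant l'aire, d'où, à l'optimum, l'égalité de tous les côtés.

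Il reste à montrer que le maximiseur est cocyclique. Le maximiseur global est en particulier optimal parmi tous les $N$-gones ayant ses propres longueurs de côtés; or, à longueurs fixées, c'est exactement le résultat sur les polygones cocycliques annoncé dans le titre qui affirme que le polygone inscrit dans un cercle maximise l'aire. Le maximiseur est donc inscriptible dans un cercle. Un polygone à la fois équilatéral et cocyclique est alors régulier: des cordes égales d'un même cercle sous-tendent des arcs égaux, donc des angles au centre égaux, ce qui impose l'égalité des angles aux sommets. Le maximiseur coïncide ainsi avec le polygone régulier à $N$ côtés de périmètre $\ell$, et le théorème s'ensuit.

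La principale difficulté sera l'étape cocyclique, c'est-à-dire le théorème sur les polygones cocycliques: contrairement à l'argument local et élémentaire sur les côtés adjacents, elle exige un contrôle global de l'aire en fonction des angles à longueurs fixées --- par exemple via un argument variationnel montrant qu'à l'optimum tous les sommets voient un même rayon, ou via une propriété de concavité de l'aire vue comme fonction des demi-angles au centre. C'est ce résultat que je démontrerais d'abord et dont le présent théorème n'est qu'un corollaire. Un soin particulier sera également nécessaire dans l'argument de compacité pour écarter proprement les configurations dégénérées.
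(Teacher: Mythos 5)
Votre esquisse est correcte et suit pour l'essentiel la démonstration classique de L'Huilier et Legendre que l'article cite sans la reproduire~: existence par compacité, convexité par réflexion, égalisation des côtés adjacents par l'argument de l'ellipse, puis cocyclicité en invoquant le Théorème \ref{thm:Huilier}, c'est-à-dire précisément le résultat \og plus méconnu \fg{} sur lequel, comme le signale l'introduction, repose la preuve de Legendre. Vous identifiez à juste titre que toute la difficulté se concentre dans cette étape cocyclique, dont la version \og points critiques \fg{} est l'objet du reste de l'article (Théorème \ref{thm:pt-critiques-aire}).
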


Ce résultat est probablement du à \ancetre{L'Huilier}-- il est présent
(p.107, paragraphe 13) dans 
\cite{LHUILIER89}, et semble dater de 1782-- dont la démonstration est reprise {\it verbatim} par
\ancetre{Legendre}. 
Cette démonstration est basée sur un résultat plus méconnu
\footnote{\cite{LHUILIER89}, p.107, paragraphe 13, \cite{LEGENDRE14},
Proposition VI, p.133, il est à noter que ce théorème disparait dès
l'édition suivante, \cite{LEGENDRE15}, alors que \ancetre{Legendre} est
mort en 1833} :


\begin{thm}\label{thm:Huilier}
Soit $N\geq 4$ un entier naturel et un $N$-uplet $\ell_1,\dots,\ell_N$ de réels strictement positifs. Parmi les polygones à $N$ cotés de longueurs respectives $\ell_1,\dots,\ell_N$, un polygone d'aire maximale est \emph{cocyclique}, \ie inscrit dans un cercle. 
\end{thm}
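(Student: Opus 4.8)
La première étape consiste à établir l'existence d'un maximiseur. L'ensemble des polygones à $N$ côtés de longueurs $\ell_1,\dots,\ell_N$ fixées, considéré modulo les isométries directes, s'identifie à une partie compacte : c'est l'espace des configurations d'une chaîne articulée fermée, que l'on peut décrire par les directions $\theta_1,\dots,\theta_N$ des côtés soumises à la condition de fermeture $\sum_k \ell_k e^{i\theta_k}=0$. L'aire étant une fonction continue de ces directions, elle atteint son maximum; il restera à vérifier que ce maximum est atteint en une configuration non dégénérée (d'aire strictement positive), ce qui est assuré dès que l'inégalité polygonale $\ell_k<\sum_{j\neq k}\ell_j$ vaut pour tout $k$.

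\textbf{Réduction au quadrilatère.} Le cœur de la preuve est le lemme suivant : parmi les quadrilatères de côtés donnés $a,b,c,d$, celui d'aire maximale est inscriptible. Pour l'établir, je découperais le quadrilatère $ABCD$ par la diagonale $AC$ et paramétrerais la famille à un paramètre par l'angle $B$ au sommet $B$; l'angle $D$ est alors déterminé par l'égalité $a^2+b^2-2ab\cos B = c^2+d^2-2cd\cos D$ exprimant que les deux triangles partagent la diagonale. Un calcul de dérivée première donne $\frac{dS}{dB}=\frac{ab}{2\sin D}\sin(B+D)$, qui s'annule si et seulement si $B+D=\pi$, c'est-à-dire précisément lorsque $ABCD$ est cocyclique.

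\textbf{Passage au cas général.} Soit $Q=P_1\cdots P_N$ un polygone d'aire maximale. Pour tout indice $i$, je figerais tous les sommets sauf les deux sommets consécutifs $P_{i+1}$ et $P_{i+2}$ : la diagonale $P_iP_{i+3}$ découpe $Q$ en le quadrilatère $P_iP_{i+1}P_{i+2}P_{i+3}$ et un polygone de forme constante, de sorte que l'aire totale vaut (aire du quadrilatère) $+$ (constante). La maximalité de $Q$ force alors le quadrilatère $P_iP_{i+1}P_{i+2}P_{i+3}$ à être d'aire maximale à côtés fixés, donc cocyclique par le lemme (pour $N=4$, ce quadrilatère est le polygone tout entier et l'on conclut directement). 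Ainsi quatre sommets consécutifs quelconques sont cocycliques. Enfin, les cercles circonscrits à $\{P_i,\dots,P_{i+3}\}$ et $\{P_{i+1},\dots,P_{i+4}\}$ passent par les trois points communs $P_{i+1},P_{i+2},P_{i+3}$ : s'ils ne sont pas alignés, ces trois points déterminent un unique cercle, les deux cercles coïncident, et une récurrence place tous les sommets sur un même cercle.

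\textbf{Le point délicat.} L'obstacle principal réside dans le traitement des dégénérescences, qui conditionne la validité de chaque étape. Il faut d'abord s'assurer que le maximiseur est convexe (par un argument de réflexion à la \ancetre{Steiner} : replier une portion du bord de part et d'autre d'une corde augmente l'aire à longueurs constantes), afin que la diagonale $P_iP_{i+3}$ soit intérieure, que le polygone reste simple sous les petites déformations de la chaîne articulée, et que l'aire se décompose additivement. Il faut ensuite exclure les triplets de sommets consécutifs alignés, faute de quoi le cercle circonscrit dégénère en droite et l'argument de recollement tombe en défaut. Je m'attends à ce que la justification rigoureuse de la convexité et de la non-dégénérescence du maximiseur soit la partie la plus technique, le reste se ramenant à un enchaînement de vérifications élémentaires.
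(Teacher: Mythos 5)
Votre stratégie est correcte dans ses grandes lignes, mais elle emprunte une route réellement différente de celle de l'article : c'est pour l'essentiel la démonstration classique à la \ancetre{L'Huilier}--\ancetre{Steiner} (existence d'un maximiseur par compacité de l'espace des configurations, lemme du quadrilatère d'aire maximale à côtés fixés, puis recollement des cercles circonscrits de quadruplets consécutifs partageant trois sommets), alors que l'article ne démontre pas le Théorème \ref{thm:Huilier} par cette voie : il le présente comme résultat historique et en obtient une version précise comme corollaire de la compacité de $\PP_0(\ell)$ (Proposition \ref{compa}), du principe de \ancetre{Fermat} et de la caractérisation complète des points critiques de l'aire orientée (Théorème \ref{thm:pt-critiques-aire}), un maximiseur ne pouvant être colinéaire, donc singulier, puisque son aire orientée est strictement positive. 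Votre calcul $\frac{dS}{dB}=\frac{ab}{2\sin D}\sin(B+D)$ est exact et le recollement par trois points communs non alignés est le bon argument. Ce que chaque approche apporte : la vôtre est élémentaire et autonome, mais le \og point délicat \fg{} que vous identifiez vous-même -- convexité et simplicité du maximiseur, additivité de l'aire le long d'une diagonale, exclusion des triplets de sommets alignés -- en constitue réellement la partie la plus lourde (l'argument de réflexion pour la convexité demande un traitement soigneux des polygones croisés, et l'aire naïve d'un polygone non simple n'est pas contrôlée par son aire orientée) ; c'est précisément pour contourner ces difficultés que l'article travaille avec l'aire \emph{orientée}, qui se décompose additivement sans hypothèse de simplicité, identifie une fois pour toutes les configurations dégénérées comme les points singuliers de $\PP_0(\ell)$ et traite uniformément le cas $z_{n+1}=z_{n-1}$ dans l'argument de \ancetre{Cauchy}--\ancetre{Schwarz}. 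En contrepartie de sa technicité, l'approche de l'article donne davantage que la vôtre : elle caractérise \emph{tous} les points critiques, et non le seul maximiseur.
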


L'objet du présent article est de présenter une démonstration d'un
résultat\footnote{L'énoncé précis est le Théorème
  \ref{thm:pt-critiques-aire}, il s'agit d'une démonstration
  inédite.} dans cette lignée, comparativement beaucoup plus récent,  du à
\ancetre{Khimshiashvili} et \ancetre{Panina}, \cite{KP2008}, que l'on peut énoncer de façon informelle~:

{\it Sur l'espace des polygones à $N$ cotés dont les longueurs sont imposées,
 les points critiques de l'aire orientée sont les polygones cocyliques.
}

Avant de démontrer cet énoncé, une étape fondamentale est de préciser son sens exact. Cela est fait dans le paragraphe \ref{sec:defs}.
La preuve sera ensuite développée dans les paragraphes \ref{sec:al}, \ref{sec:rep-param-Tz} et \ref{sec:preuve-thm}, le paragraphe \ref{sec:preuves-lemmes} détaillant quelques démonstrations de lemmes techniques dont le développement immédiat aurait nui à la fluidité du discours. Le paragraphe \ref{sec:calc-diff-rappels} apporte, quant à lui, quelques précisions concernant les éléments de géométrie différentielle nécessaires aux calculs.

Nous donnerons aussi des variantes liées à d'autres contraintes dans
le paragraphe \ref{sec:lagrange}, quelques illustrations numériques dans le paragraphe \ref{sec:montee-gradient}
 et concluerons,
dans le paragraphe \ref{sec:applications}, par des
ouvertures possibles concernant les liens entre ces calculs, le
dénombrement de configurations cocycliques, la
recherche effective de
polynômes de \ancetre{Heron}--\ancetre{Robbins}, liant aire et longueur des côtés d'un
polygone cocyclique, et la topologie de certains espaces des polygones.

\section{Définitions, notations et énoncé}\label{sec:defs}

Fixons un entier $N\geq 4$. Nous identifions une
fois pour toute $\C$, le corps des nombres complexes, avec le plan
euclidien orienté, de la manière usuelle. 

Pour un nombre complexe $z\in\C$, on note $\Re(z)$, resp $\Im(z)$, sa partie réelle, resp. sa partie imaginaire.

On note $\S^1$ le cercle
unité du plan,\ie le groupe des nombres complexes de module $1$.

Ce groupe  agit sur $\C$ par multiplication. Cette action est
celle du groupe $SO_2$ des rotations vectorielles du plan euclidien orienté. 

\subsection{Indexation}

Si $A_0,\dots,A_{N-1}$ sont $N$ points du plan, le polygone planaire orienté  $(A_0,\dots,A_{N-1})$ est la suite de ces points, placés dans cet ordre de sorte que l'on puisse considérer les \emph{côtés} de ce polygone, $A_0\to A_1, A_1\to A_2,\dots,A_{N-2}\to A_{N-1}$, sans oublier $A_{N-1}\to A_0$. 

L'indexation des points d'un polygone orienté possède donc intrinsèquement un caractère cyclique. L'indexation de ses côtés aussi. 

Pour tout ce qui concerne les développements théoriques, nous adoptons donc une indexation abstraite en considérant $(S,\Sigma)$ le graphe cyclique orienté sur $N$ sommets.

Du point de vue de l'implémentation des codes de calcul dans le paragraphe \ref{sec:numerics}, on peut choisir une réalisation très concrète de l'ensemble des sommets de ce graphe
$$
S=\{0,\dots,N-1\}
$$

Le groupe $\ZnZ$ agit naturellement sur le graphe $(S,\Sigma)$, l'action de $1$ se traduisant par le décalage de chaque sommet vers le sommet suivant $n\mapsto n+1$, avec comme convention, pour la réalisation concrète
$$
0+1=1,1+1=2\dots,(N-2)+1=(N-1), (N-1)+1=0
$$

Si $n\in S$, $n-1$ est donc le sommet précédant $n$ dans $S$.

Convenons de noter, pour un sommet $n$ de $S$, $n+\demi$ l'arète suivant le sommet $n$ et $n-\demi$, l'arète précédant $n$,\ie
$$ 
n+\demi:=(n,n+1)
\et
n-\demi:=(n-1,n)
$$
Pour la réalisation concrète de $S$, l'ensemble des arêtes se réécrit donc
$$\Sigma=\{0+\demi,1+\demi,\dots, (N-1)+\demi\}=\{n+\demi,n\in S\}$$

Maintenant, pour une arête $\nu\in\Sigma$, $\nu=(n,n+1)=n+\demi$, convenons de noter\footnote{de sorte que $\nu=(\nu-\demi,\nu+\demi)$} $\nu-\demi$ l'extrémité de départ $n$ de $\nu$, $\nu+\demi$ l'extrémité d'arrivée $n+1$ de $\nu$ et $\nu+1=(n+1,n+2)$ l'arète suivant $\nu$, on définit ainsi une action de $\ZnZ$ sur $\Sigma$ et on a, par simple jeu d'écritures,
$$
\forall n\in S,\, 
(n-\demi)-\demi=n-1,\,
(n-\demi)+\demi=n=(n+\demi)-\demi,\,
(n+\demi)+\demi=n+1
$$
\et
$$
\forall \nu\in \Sigma,\, 
(\nu-\demi)-\demi=\nu-1,\,
(\nu-\demi)+\demi=\nu=(\nu+\demi)-\demi,\,
(\nu+\demi)+\demi=\nu+1
$$



Si on utilise les règles de calcul dans $\R$ modulo $N.\Z$, on peut naturellement aussi identifier  $S$, resp. $\Sigma$, avec  $\ZnZ$,
resp. $\ZnZ+\frac12 \subset \RnZ$, ce qui permet de d'identifier $n+\frac12$ avec $n+\demi$, $\nu-\frac12$ avec $\nu-\demi$, etc...

Du point de vue de la notation, les éléments de $S$ et de $\C^S$
seront désignés, autant que possible, par des lettres latines, ceux de $\Sigma$ et de $\C^\Sigma$ par
des lettres grecques.

\begin{definition}
Un $N$-gone orienté planaire est une suite de $N$ nombres complexes indéxée
par $S$. C'est un $S$-uplet, un élément de $\C^S$.  
\end{definition}

Nous noterons
de façon générique un tel polygone $z=(z_n)_{n\in S}$. Le $\Sigma$-uplet des
cotés de ce polygone est la suite de $N$  nombres complexes indexée par
$\Sigma$, l'ensemble des arêtes du graphe, $(\zeta_\nu)_{\nu\in \Sigma}$ avec
$\zeta_\nu=z_{\nu{+\demi}}-z_{\nu{-\demi}}$.

\begin{figure}[h!]
  \centering
  \includegraphics[scale=0.7]{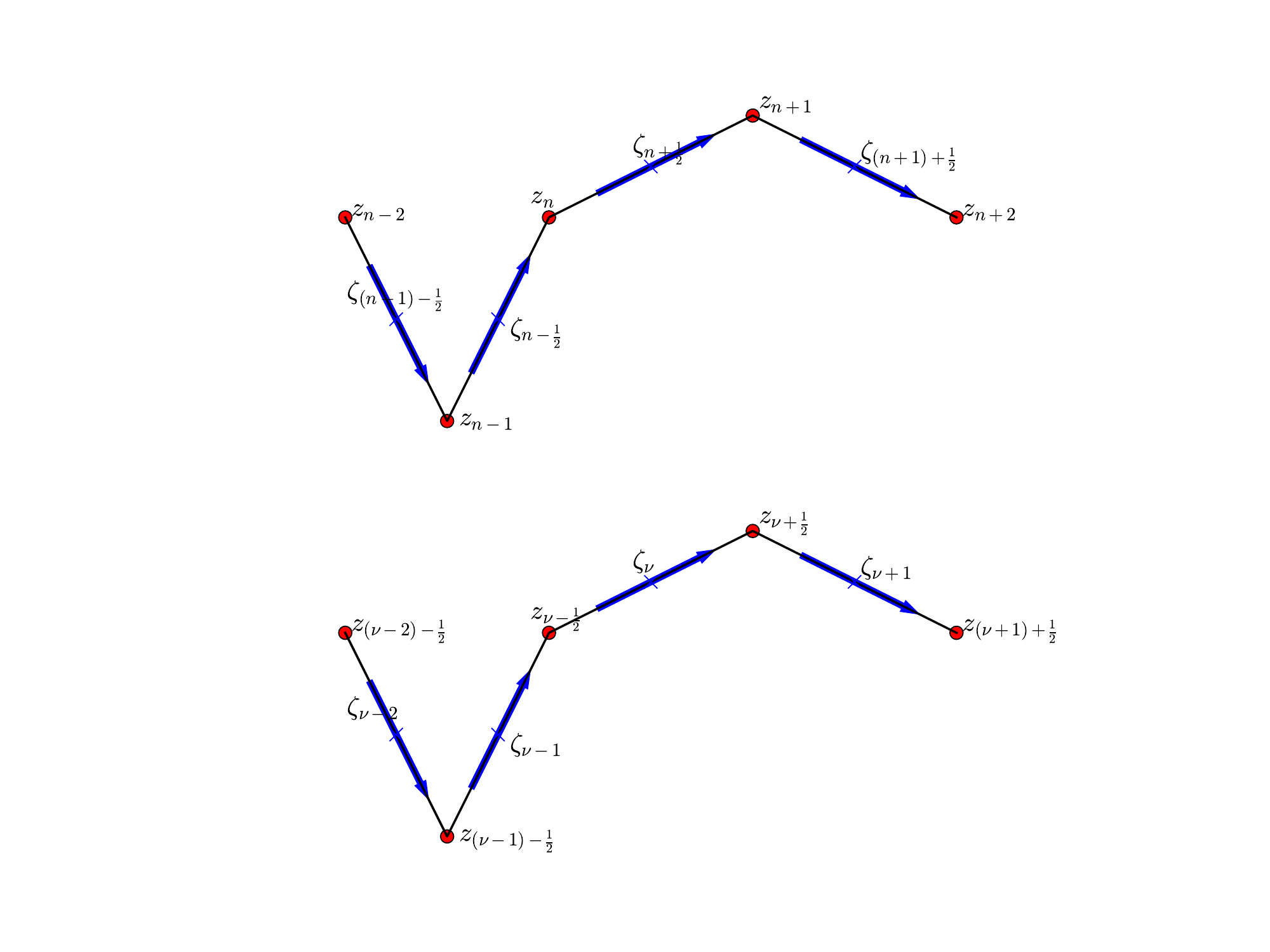}
  \caption{Indexations}
  \label{fig:indexation}
\end{figure}

On note $D$ l'application linéaire qui, à un polygone, associe le
$\Sigma$-uplet de ses
cotés.
\begin{equation}
  \label{eq:def-D}
\begin{array}{cccl}
D:&\C^S &\to& \C^\Sigma\\
  &z=(z_n)_{n\in S}&\mapsto  & \zeta=Dz=(z_{\nu{+\demi}}-z_{\nu{-\demi}})_{\nu\in \Sigma}
\end{array}
\end{equation}

Bien que l'application $D$ ne soit pas bijective, on peut y remédier en posant 
$$\C_0^S=\{z\in \C^S,\, \sum_n z_n=0\}
\et
\C_0^\Sigma=\{\zeta\in \C^\Sigma,\, \sum_\nu \zeta_\nu=0\}
\mathpunct{,}
$$ 
si bien que, par restriction, $D:\C_0^S\to \C_0^\Sigma$ est un isomorphisme.

\subsection{Variétés de polygones}

\subsubsection{Des équations cartésiennes...}

Etant donné un $\Sigma$-uplet, $\ell=(\ell_\nu)_{\nu\in \Sigma}$,
de nombres réels positifs,  on considère l'ensemble $\PP(\ell)$ des
polygones dont les cotés ont pour longueurs $\ell$, \ie
\begin{eqnarray}
  \label{eq:def-Pl}
\PP(\ell)&=&
\{z\in\C^S,\,\forall \nu\in \Sigma,\,|z_{\nu{+\demi}}-z_{\nu{-\demi}}|=\ell_\nu\}\\
&=&
\{z\in\C^S,\,\forall \nu\in \Sigma,\,|(Dz)_\nu|=\ell_\nu\}\nonumber
\end{eqnarray}

Il est bien connu (c'est un exercice\footnote{Un indice: prendre
  $\nu_0$ tel que $\ell_{\nu_0}$ est maximal. Considérer
  $p\in\{1,..,N-1\}$ minimal pour la propriété $\sum_{q=0}^p
  \ell_{\nu_0+q}> \frac12\sum_{\nu}\ell_\nu$, vérifier que $p\leq N-2$ et considérer le triangle
  de cotés $\sum_{q=0}^{p-1}  \ell_{\nu_0+q}$, $\ell_{\nu_0+p}$ et $\sum_{q=p+1}^{N-1}  \ell_{\nu_0+q}$}  basé sur l'inégalité
triangulaire) que $\PP(\ell)$ est non vide si et seulement si
\begin{equation}
  \label{eq:PP-ND1}
  \forall \nu\in \Sigma,\, \ell_\nu \leq \frac12 \sum_{\nu'\in \Sigma} \ell_{\nu'}.
\end{equation}
Il est par ailleurs bien connu que si l'une des inégalités \eqref{eq:PP-ND1}
est une égalité alors l'indice $\nu_0$ pour lequel ceci se produit est
unique et les polygones dans $\PP(\ell)$ sont tous dégénérés, \ie, les
points d'un tel polygone sont situés sur le côté $\nu_0$, de longueur
$\ell_{\nu_0}$. Nous excluons ces configurations triviales de nos considérations et
supposons donc dorénavant que
\begin{equation}
  \label{eq:PP-ND1-strict}
  \forall \nu\in \Sigma,\, 0<\ell_\nu < \frac12 \sum_{\mu\in \Sigma} \ell_\mu.
\end{equation}

Un tel vecteur $\ell$ sera appelé un \emph{vecteur de longueurs admissible}.

Evidemment, ce qui nous intéresse vraiment, ce sont les polygones à isométrie
(directe) près.

Une première normalisation pratique, qui préserve les symétries naturelles
entre les indices, consiste à travailler sur l'ensemble des \emph{polygones centrés}
\begin{equation}
  \label{eq:def-Pl0}
\PP_0(\ell)=\PP(\ell)\cap\C_0^S=\{z\in\PP(\ell),\,\sum_{n\in S}z_n =0\}  
\end{equation}

Comme nous voulons faire du calcul différentiel et étudier les points critiques de la fonction à valeurs réelles \og aire orientée \fg{} sur l'\og espace des polygones à longueurs prescrites\fg\ , l'idéal est de disposer d'une \emph{sous-variété différentielle} d'un espace vectoriel réel de dimension finie.

Cela sera expliqué plus loin mais nous constatons pour le moment que par définition, les ensembles $\PP(\ell)$ et $\PP_0(\ell)$ sont des \textit{sous-variétés algébriques réelles} d'un espace vectoriel réel de dimension finie, $\C^S\simeq (\R^2)^S\simeq \R^{2N}$,\ie le lieu des zéros communs d'une famille de polynômes réels à $\dim_{\R}\C^S=2N$ indéterminées.

Pour $\nu \in \Sigma$, $z\in \C^S$, posons
\begin{equation}
  \label{eq:def-Lnu}
L^2_\nu(z)=|z_{\nu{+\demi}}-z_{\nu{-\demi}}|^2
\mathpunct{.}  
\end{equation}

En exprimant $L^2_\nu$ en fonction des parties réelles et imaginaires de $z_{\nu{+\demi}}$ et $z_{\nu{-\demi}}$, il apparait clairement que $L^2_\nu$ est  un polynome quadratique réel sur $\C^S\simeq (\R^2)^S$.

On a alors, comme annoncé,  que $\PP(\ell)$ et $\PP_0(\ell)$ sont les lieux des zéros communs d'une famille de polynômes réels~:
\begin{eqnarray}\nonumber 
\PP(\ell)& =& \{z\in\C^S,\, \forall\nu\in \Sigma,\,L^2_\nu(z)=\ell_\nu^2\} ,\\ \nonumber
\PP_0(\ell)&=& \{z\in\C^S,\,\forall\nu\in \Sigma,\, L^2_\nu(z)=\ell_\nu^2\}\cap \C^S_0
.
\end{eqnarray}

Remarquons qu'on utilisé le fait que $\C^S_0$ est lui même une sous-variété de $\C^S$ avec 
$$
\C_0^S = \{z\in\C^S,\, \Re(\sum_n z_n)=0\}\cap \{z\in\C^S,\, \Im(\sum_n z_n)=0\}
$$
En $z\in\C_0^S$, l'espace tangent à $\C_0^S$ est $\C_0^S$.



Concernant le problème d'extremum qui nous intéresse, l'avantage majeur de $\PP_0(\ell)$ par rapport à $\PP(\ell)$ est sa compacité.

\begin{proposition}\label{compa}
Soit $\ell$ un vecteur de longueurs admissible. 
L'ensemble $\PP_0(\ell)$ est une partie compacte de $\C^S$.
\end{proposition}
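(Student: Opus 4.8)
La stratégie repose sur le théorème de Heine--Borel~: dans l'espace vectoriel réel de dimension finie $\C^S\simeq\R^{2N}$, une partie est compacte si et seulement si elle est fermée et bornée. Il suffit donc d'établir séparément la fermeture et la bornitude de $\PP_0(\ell)$, cette dernière étant le point de fond.

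Pour la fermeture, j'observerais que $\PP_0(\ell)$ est l'intersection des lieux $\{z\in\C^S,\,L^2_\nu(z)=\ell_\nu^2\}$, pour $\nu\in\Sigma$, avec $\C_0^S$. Chaque $L^2_\nu$ est continue (c'est même un polynôme réel, comme remarqué plus haut), donc chacun de ces lieux est fermé comme image réciproque du singleton $\{\ell_\nu^2\}$. De même, $\C_0^S$ est fermé comme noyau de l'application linéaire $z\mapsto\sum_n z_n$. L'intersection d'une famille finie de fermés étant fermée, $\PP_0(\ell)$ est fermé.

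La partie non triviale est la bornitude. Posons $L=\sum_{\nu\in\Sigma}\ell_\nu$, le périmètre. L'idée clé est que les longueurs des côtés étant fixées, le diamètre du polygone est contrôlé~: pour deux sommets $m,n\in S$, la différence $z_m-z_n$ s'écrit comme somme signée des côtés $\zeta_\nu=(Dz)_\nu$ le long d'un chemin reliant $n$ à $m$ dans le graphe cyclique, de sorte que l'inégalité triangulaire fournit
$$
|z_m-z_n|\leq\sum_{\nu\in\Sigma}|\zeta_\nu|=\sum_{\nu\in\Sigma}\ell_\nu=L,
$$
borne indépendante de $z\in\PP(\ell)$ et du couple $(m,n)$.

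Enfin, j'exploiterais la condition de centrage $\sum_{n\in S}z_n=0$, propre à $\PP_0(\ell)$~: pour tout $m\in S$, on a
$$
z_m=z_m-\frac1N\sum_{n\in S}z_n=\frac1N\sum_{n\in S}(z_m-z_n),
$$
d'où $|z_m|\leq\frac1N\sum_{n\in S}|z_m-z_n|\leq L$. Tous les sommets sont donc contenus dans le disque fermé de centre l'origine et de rayon $L$, ce qui borne $\PP_0(\ell)$ et achève la preuve. À mon sens, la subtilité principale tient précisément à ce recours au centrage~: l'ensemble $\PP(\ell)$ n'est pas borné, étant invariant par les translations $z\mapsto z+c$, $c\in\C$, de sorte que le passage à $\PP_0(\ell)$ est indispensable et constitue le cœur de l'argument.
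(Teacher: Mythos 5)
Votre démonstration est correcte et suit essentiellement la même stratégie que celle du papier~: fermeture par continuité des $L^2_\nu$, puis bornitude en majorant la distance entre deux sommets par le périmètre et en exploitant la condition de centrage $\sum_n z_n=0$. La seule différence est cosmétique (vous utilisez l'identité de moyenne $z_m=\frac1N\sum_n(z_m-z_n)$ là où le papier invoque la convexité du disque contenant le barycentre, ce qui vous donne au passage la constante $L$ au lieu de $2L$).
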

\begin{proof}
Il est clair que $\PP_0(\ell)$ est une partie fermée, non vide en raison de la continuité des applications $L_\nu^2-\ell_\nu^2$ et du caractère admissible du vecteur $\ell$.

La compacité de $\PP_0(\ell)$ est donc conséquence de son caractère borné et du fait que $\C^S$ est de dimension finie.
 
 Comme les longueurs des arêtes des polygones de $\PP_0(\ell)$ sont fixées, la distance entre deux sommets quelconques d'un tel polygone est majorée par le périmètre du polygone, qui vaut $P(\ell)=\sum_\nu \ell_\nu$.

Un polygone $z\in\PP_0(\ell)$ est donc contenu dans un disque plan (par exemple centré en l'un des sommets) de rayon $P(\ell)$.

Par convexité, ce disque contient le centre de gravité de $z$, qui, et c'est le point de la définition de $\PP_0(\ell)$, est $0$.

Le polygone $z$ est donc contenu dans le disque plan de centre $0$ et de rayon $2P(\ell)$, et ceci montre que, dans $\C^S$, la partie $\PP_0(\ell)$ est contenue dans la boule (euclidienne) de centre $0$, de rayon $2P(\ell)$.

\end{proof}

\subsubsection{...aux sous-variétés}

Nous donnons au paragraphe \ref{sec:calc-diff-rappels} quelques précisions concernant le calcul différentiel réel dans $\C^S$ et les notions de sous-variétés algébriques et différentielles de ce $\R$-espace vectoriel.

Dans le cas de $\PP_0(\ell)$, cette sous-variété algébrique réelle de $\C^S$ admet en tout point $z$  un espace tangent $\TT_z$ au sens \emph{réel algébrique}  défini
comme l'intersection des noyaux des différentielles en $z$ des
polynômes définissants.

En l'occurence, comme on a
$$\PP_0(\ell)=\{z\in\C^S,\,\forall\nu\in\Sigma,\, 
L^2_\nu(z)=\ell_\nu^2
\}\cap \C^S_0
\mathpunct{,}
$$
alors
l'espace tangent $\TT_z$ à $\PP_0(\ell)$ en $z\in\PP_0(\ell)$ est
défini par 
\begin{equation}
  \label{eq:cartesienne-Tz-00}
  \TT_z = \{t=(t_n)\in\C^S,\, \forall\nu\in\Sigma,\, 
(d_z L^2_\nu).t=0
\}\cap \C^S_0
\end{equation}
que nous expliciterons en l'équation \eqref{eq:cartesienne-Tz}.

Remarquons que pour faire ce calcul, on utilisé le fait que $\C^S_0$ est lui même une sous-variété de $\C^S$ avec 
$$
\C_0^S = \{z\in\C^S,\, \Re(\sum_n z_n)=0\}\cap \{z\in\C^S,\, \Im(\sum_n z_n)=0\}
$$

La dimension réelle de $\TT_z$, intersection de $N+2$ noyaux de formes linéaires dans un espace de dimension $2N$ est supérieure à $N-2$. Si celle-ci vaut $N-2$, \ie si les formes linéaires sont indépendantes, on dit que le point $z$ est un point \emph{régulier} de $\PP_0(\ell)$, sinon, on dit que le point $z$ est un point \emph{singulier} de $\PP_0(\ell)$.

La proposition \ref{prop:Pl-variete} identifie les points singuliers de $\PP(\ell)$ et de $\PP_0(\ell)$ et donne la condition  nécessaire et suffisante sur $\ell$ pour que  $\PP(\ell)$ et $\PP_0(\ell)$ soient des sous-variétés différentielles de $\C^S$, \ie n'aient pas de points singuliers. Cette condition sur $\ell$ montre que, pour $\ell$ admissible et générique\footnote{en tous sens communément acceptés de ce mot}, $\PP_0$ et $\PP_\ell$ sont des sous-variétés différentielles de $\C^S$.

Les calculs faits donnent au passage, \cf Eq. \eqref{eq:parametrage-Tz}, une
description paramétrique de l'espace tangent 
$\TT_z$.

\subsubsection{Un quotient.}
L'{\it espace des  modules} $\MM(\ell)$ associé à $\ell$ est l'ensemble $\PP(\ell)$
modulo l'action naturelle des isométries directes de $\C\simeq \R^2$.

L'espace des modules $\MM(\ell)$ associé à $\ell$ s'identifie donc
naturellement à $\PP_0(\ell)$ modulo l'action diagonale sur $\C^S$ du cercle
$\S^1$, \ie l'action de $SO_2$ sur les polygones.

$D$ induit une bijection de $\PP_0(\ell)$ sur 
$$
\{(\zeta_\nu)_{\nu\in\Sigma},\,\forall
\nu\in\Sigma,\,|\zeta_\nu|=\ell_\nu \text{ et } \sum_{\nu\in\Sigma}\zeta_\nu=0\}
$$
lui-même en bijection naturelle, par normalisation, avec 
$$
\{(u_\nu)_{\nu\in\Sigma}\in(\S^1)^\Sigma,\,\sum_{\nu\in\Sigma}\ell_\nu.u_\nu=0\}
$$

L'espace $\MM(\ell)$ est étudié par exemple dans \cite{FS07} où il est
défini comme étant 
$$
\{(u_\nu)_{\nu\in\Sigma}\in(\S^1)^\Sigma,\,\sum_{\nu\in\Sigma}\ell_\nu.u_\nu=0\}/\S^1
$$

Cette présentation de $\MM(\ell)$ a l'avantage d'exhiber clairement
la correspondance naturelle entre les espaces $\MM(\ell)$ et $\MM(\sigma.\ell)$ où, pour
une permutation $\sigma$ de $\Sigma$, on note $\sigma.\ell=(\ell_{\sigma(\nu)})_{\nu\in\Sigma}$.

Génériquement en $\ell$, la sous-variété  $\PP(\ell)$ a pour dimension (réelle) $2N-N=N$, la sous-variété compacte $\PP_0(\ell)$ a pour dimension $N-2$ et la variété\footnote{elle n'est pas, en tant que telle, une sous-variété de $\C^S$, en conséquence, pour garder un niveau d'abstraction raisonnable, nous éviterons de calculer directement sur $\MM(\ell)$. }
compacte $\MM(\ell)$ a pour dimension $N-3$. 

Enfin, pour conclure cette section, on remarque qu'on choisit
volontairement de ne pas identifer deux polygones symétriques par rapport à une droite. 

\subsection{L'aire orientée}

Si $\Delta_0=(0,z_0,z_1)$ est un triangle plan orienté positivement,\ie si $(z_0,z_1)$ est une base directe de $\R^2$, $z_0=x_0+iy_0$, $z_1=x_1+iy_1$, son aire, au sens le plus naïf du terme se calcule avec le déterminant donnant l'aire du parallèlogramme $(0,z_0,z_0+z_1,z_1)$ 
$$
A(\Delta_0)=\frac12
\begin{vmatrix}
x_0 & x_1\\
y_0 & y_1
\end{vmatrix}=\frac12(x_0.y_1-x_1.y_0) 
=\frac1{2i}\Im(\overline{z_0}.z_1) = \frac1{4i}(\overline{z_0}.z_1-z_0.\overline{z_1}) 
$$
Si maintenant $z=(z_{k})_{k\in S}$ est un polygone convexe, contenant $0$ en son intérieur, orienté positivement, \ie, si chaque triangle $\Delta_k=(0,z_k,z_{k+1})$ est orienté positivement, son aire est la somme des aires des triangles $\Delta_k$, \ie
$$
A(z)= \sum_{k\in S}\frac1{4i}(\overline{z_k}.z_{k+1}-z_{k}.\overline{z_{k+1}}) 
$$
En usant du caractère cyclique de $S$, on a donc aussi
\begin{eqnarray*}
A(z)&=& \sum_{k\in S}\frac1{4i}(\overline{z_k}.z_{k+1}-z_{k-1}.\overline{z_{k}}) \\ &=&  \frac1{4i}\sum_{k\in S}\overline{z_k}.(z_{k+1}-z_{k-1})   
= \frac12\sum_k \Im((z_{k+1}-z_{k})\overline{z}_k)
\end{eqnarray*}

On remarque que cette quantité, $A(z)$, définie pour tout polygone $z$, est invariante par translations, rotations directes du plan et que $A(z)=-A(\overline{z})$ où $\overline{z}$ est le symétrique orthogonal du polygone $z$ par rapport à l'axe des abscisses.

On appelle $A(z)$ l'\emph{aire algébrique} du polygone $z$.
La valeur absolue de cette aire coincide avec l'aire naïve au cas où le polygone $z$ est \emph{simple} et donc admet une traingulation.

On définit donc la fonction \og aire algébrique\fg\, $A:\C^S\to \R$~: à un polygone $z=(z_n)\in\C^S$, elle associe
\begin{equation}
  \label{eq:def-aire}
  A(z)=\frac1{4i}\sum_k (z_{k+1}-z_{k-1})\overline{z}_k=\frac12\sum_k \Im((z_{k+1}-z_k)\overline{z}_k)
\end{equation}

La fonction $A$ est invariante sous l'action des translations et sous
l'action diagonale de $\S^1$ sur $\C^S$ : 
\begin{equation}
  \label{eq:action-S1-aire}
\forall \theta\in\R,\,A(e^{i\theta}.z) = A(z).  
\end{equation}
Elle définit donc une fonction sur chaque espace $\MM(\ell)$.

$A$ est clairement un polynome
quadratique réel sur $\C^S\simeq (\R^2)^S$. Calculons sa différentielle en $z\in\C^S$ en utilisant la règle de différentiation des fonctions polynomiales en $z_k$, $\overline{z}_k$ signalée en fin du paragraphe \ref{sec:calc-diff-rappels}. On a par exemple,  pour $k\in S$,
\begin{eqnarray*}
d(z_{k+1}.\overline{z}_k)
&=&\frac{\partial (z_{k+1}.\overline{z}_k)}{\partial z_{k+1}}.dz_{k+1}
+\frac{(\partial z_{k+1}.\overline{z}_k)}{\partial \overline{z}_k}.d\overline{z}_k 
=\overline{z}_k.dz_{k+1}+z_{k+1}.d\overline{z}_k\\
d(z_{k-1}.\overline{z}_k)
&=&\overline{z}_k.dz_{k-1}+z_{k-1}.d\overline{z}_k 
\end{eqnarray*}
et donc, en sommant et en décalant les indices dans les deux dernières sommes,
\begin{eqnarray*}
  d_zA&=& \frac1{4i}\sum_k (z_{k+1}-z_{k-1}).d\overline{z}_k
+\frac1{4i}\sum_k \overline{z}_k.dz_{k+1}
-\frac1{4i}\sum_k \overline{z}_k.dz_{k-1}\\
&=&\frac1{4i}\sum_k (z_{k+1}-z_{k-1}).d\overline{z}_k
+\frac1{4i}\sum_k (\overline{z}_{k-1}-\overline{z}_{k+1}).dz_{k}\\
&=&\frac12 
\sum_k \Im((z_{k+1}-z_{k-1}).d\overline{z}_k)
\end{eqnarray*}

 La différentielle de $A$ en $z\in\C^S$ vaut
\begin{equation}
  \label{eq:diff-A}
d_z A = \frac12\sum_k \Im((z_{k+1}-z_{k-1})d\overline{z}_k)  
\end{equation}
C'est la forme $\R$-linéaire sur le $\R$-espace vectoriel $\C^S$ définie par
$$
\forall t\in\C^S,\,
d_z A.t = \frac12\sum_k \Im((z_{k+1}-z_{k-1}).\overline{t}_k)  
$$

La fonction $A$, continue, admet un minimum et un maximum sur chaque
espace compact $\PP_0(\ell)$. Comme lorsque deux polygones $z$ et $z'$ sont symétriques relativement à une droite, on a $A(z)=-A(z')$, on en déduit que
maximum et minimum sur $\PP_0(\ell)$ sont opposés. Par le principe de \ancetre{Fermat}, en un extremant $z$, régulier pour $\PP_0(\ell)$, $d_z A$ s'annule sur $\TT_z$.
On pose la définition
\begin{definition}
Soit $z=(z_n)\in \PP_0(\ell)$. On dit que $z$ est
    {\it critique} pour $A$ sur $\PP_0(\ell)$ si 
    \begin{enumerate}
    \item $z$ est un point régulier de $\PP_0(\ell)$ 
    \item et $d_z A$ est nulle sur $\TT_z$.
    \end{enumerate}  
\end{definition}
Le résultat principal s'énonce alors
\begin{thm}\label{thm:pt-critiques-aire}
Un polygone $z=(z_n)\in\PP_0(\ell)$ est critique pour $A$ sur
$\PP_0(\ell)$  si et seulement si les points $z_n$ sont cocycliques,
non  alignés. 
\end{thm}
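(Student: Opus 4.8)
The plan is to turn the variational condition "$d_zA$ vanishes on $\TT_z$" into an explicit system of complex equations for the vertices, and then recognize that system as the equation of a common circumscribed circle.

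First I would compute the constraint differentials. Since $L^2_\nu(z)=|z_{\nu+\demi}-z_{\nu-\demi}|^2$, writing $\zeta_\nu=z_{\nu+\demi}-z_{\nu-\demi}$ gives $d_zL^2_\nu\cdot t=2\Re[\overline{\zeta_\nu}(t_{\nu+\demi}-t_{\nu-\demi})]$. At a point where $d_zA$ vanishes on $\TT_z=\bigcap_\nu\ker d_zL^2_\nu\cap\C_0^S$, the elementary duality fact that the span of a finite family of linear forms is the annihilator of the intersection of their kernels produces real multipliers $(\lambda_\nu)$ and a constant $b\in\C$ (carrying the two forms that cut out $\C_0^S$) with $d_zA=\sum_\nu\lambda_\nu d_zL^2_\nu$ on $\C_0^S$. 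Identifying the coefficient of each $t_n$ coming from \eqref{eq:diff-A} with the one coming from this relation, then conjugating and setting $\mu_\nu=2\lambda_\nu$, yields
\[
(\mu_{n+\demi}-\tfrac{i}{2})(z_{n+1}-z_n)=(\mu_{n-\demi}+\tfrac{i}{2})(z_n-z_{n-1})+b,\qquad n\in S.
\]
Summing over $n\in S$, the telescoping sums $\sum_n(z_{n+1}-z_n)$ vanish and the two copies of $\sum_\nu\mu_\nu\zeta_\nu$ cancel, leaving $Nb=0$, hence $b=0$. This is the analytic core; it is routine but demands care about exactly which real forms enter the multiplier identity.

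The geometric heart is then to solve this system. Setting $\beta_\nu=\mu_\nu-\tfrac i2$ and $q_n=\beta_{n+\demi}(z_{n+1}-z_n)$, the equation reads $q_n=\overline{\beta_{n-\demi}}(z_n-z_{n-1})$, whereas by definition $q_{n-1}=\beta_{n-\demi}(z_n-z_{n-1})$. Subtracting these two and using $\overline{\beta_\nu}-\beta_\nu=i$ gives the clean telescoping identity $q_n-q_{n-1}=i(z_n-z_{n-1})$, so $q_n-iz_n$ is independent of $n$; writing that constant as $-i\omega$ yields $z_n-\omega=-iq_n$ for a single $\omega\in\C$. Now $|q_n|=|\beta_{n+\demi}|\,\ell_{n+\demi}=|\beta_{n-\demi}|\,\ell_{n-\demi}$ (the two expressions for $q_n$), so $|\beta_\nu|\,\ell_\nu$ is constant around the cycle, whence $|z_n-\omega|=|q_n|$ is the same for every $n$: the vertices lie on a circle centered at $\omega$. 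The converse is the same computation read backwards: starting from $z_n=\omega+Re^{i\gamma_n}$ one checks that $\beta_\nu:=i(z_{\nu-\demi}-\omega)/(z_{\nu+\demi}-z_{\nu-\demi})$ has imaginary part exactly $-\tfrac12$ — this is precisely where equidistance is used — so $\mu_\nu:=\Re\beta_\nu=\tfrac12\cot\tfrac{\gamma_{\nu+\demi}-\gamma_{\nu-\demi}}{2}$ is real and the system holds.

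Finally I would reconcile the two qualifiers of the statement, which is where the main obstacle lies. The computation above characterizes the vanishing of $d_zA$ on $\TT_z$ (condition (2) of criticality) as concyclicity, but criticality also requires that $z$ be a \emph{regular} point (condition (1)). I would match these by examining the same multiplier relation in homogeneous form: $\sum_\nu a_\nu d_zL^2_\nu=0$ on $\C_0^S$ forces $a_\nu(z_{\nu+\demi}-z_{\nu-\demi})$ to be a constant vector around the cycle, i.e.\ all sides parallel, i.e.\ $z$ aligned. Thus the singular points are exactly the aligned polygons, which is the content of Proposition~\ref{prop:Pl-variete}. Consequently a concyclic polygon is regular if and only if it is not aligned; and since three or more distinct collinear vertices can never be equidistant from a finite center, a genuine circumscribed circle already precludes alignment except in degenerate (hence singular) configurations. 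Combining, $z$ is critical iff $d_zA$ vanishes on $\TT_z$ and $z$ is regular, iff $z$ is concyclic and non-aligned. The delicate point to get right is exactly this interplay: guaranteeing that the degenerate collinear solutions of the multiplier system are excluded by regularity rather than being miscounted as critical points.
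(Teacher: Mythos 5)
Your proof is correct, but it takes a genuinely different route from the paper's. The paper's own argument never uses \ancetre{Lagrange} multipliers: it parametrizes $\TT_z$ by $\SS_z=\carac^\bot\cap u^\bot$ via the map $-J_\alpha+C$, computes the Riesz representative $a_z$ of the pulled-back form $\AA_z$, characterizes $\|a_z\|=0$ through the equality case of \ancetre{Cauchy}--\ancetre{Schwarz}, identifies the resulting constant $o$ vertex by vertex as the circumcenter of $z_{n-1},z_n,z_{n+1}$ via the Proposition \ref{prop:centre-CC}, and then needs an induction over the degenerate set $Z=\{n:\ z_{n+1}=z_{n-1}\}$ to glue these local circumcircles into a single circle. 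You instead obtain multipliers $(\mu_\nu)$ directly from the annihilator--span duality applied to the vanishing of $d_zA$ on $\TT_z$ (legitimate even when the constraint differentials are dependent, so your bookkeeping of regularity versus vanishing is consistent), and your substitution $\beta_\nu=\mu_\nu-\frac{i}{2}$, $q_n=\beta_{n+\demi}(z_{n+1}-z_n)$, with the telescoping identity $q_n-q_{n-1}=i(z_n-z_{n-1})$, pins down the center $\omega$ and the common radius $|q_n|=|\beta_{n+\demi}|\ell_{n+\demi}=|\beta_{n-\demi}|\ell_{n-\demi}$ globally in one stroke, dispensing with both the circumcenter formula and the induction over degenerate vertices; your homogeneous-relation argument for the singular points correctly reproves the Proposition \ref{prop:Pl-variete}. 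As for what each route buys: the paper's construction of $a_z$ is reused to define the gradient and the Riemannian structure of la section \ref{sec:montee-gradient}, whereas your computation delivers the values $\mu_\nu=\frac12\cot\frac{\gamma_{\nu+\demi}-\gamma_{\nu-\demi}}{2}$ of the multipliers for free, which the paper must recompute separately in la section \ref{sec:lagrange}.
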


\section{Un peu d'algèbre linéaire et bilinéaire}\label{sec:al}

Un point dans ce qui va être développé ensuite peut mener à confusion
si l'on n'y prend pas garde. 

Pour utiliser les facilités de calculs
qu'offre le corps $\C$ en géométrie plane réelle, nous considérons les
ensembles $\C^S$ et $\C^\Sigma$ qui sont naturellement des $\C$-espaces vectoriels.

Le problème est cependant essentiellement un problème {\it réel}. $\PP_0(\ell)$ est une sous-variété réelle de
$\C^S$, son espace tangent en chaque chaque point est un sous-espace
vectoriel du $\R$-espace vectoriel $\C^S$ et la fonction $A$ est à valeurs réelles.

On verra lors de l'établissement de l'équation
\eqref{eq:parametrage-Tz}, que l'on va paramétrer (non injectivement) $\TT_z$, le sous-espace
tangent à $\PP_0(\ell)$ en $z$, par $\SS_z$, un $\C$-sev de $\C^S$, via une
application $\R$-linéaire. Le calcul de la dimension (réelle) de $\TT_z$, qui nous intéresse en premier lieu, se fera via le calcul de la dimension (complexe) de $\SS_z$.


\subsection{Produit scalaire, produit hermitien}

On munit $\C^S$ et $\C^\Sigma$ de leur structure
hermitienne usuelle, si $z,w\in \C^S$, $\zeta,\omega\in\C^\Sigma$,
\begin{equation}
  \label{eq:def-pdt-hermitien}
\pdthermitien{z}{w}:=\sum_{n\in S}z_n\overline{w}_n,\,\pdthermitien{\zeta}{\omega}:=\sum_{\nu\in \Sigma}\zeta_\nu\overline{\omega}_\nu\mathpunct{.}  
\end{equation}

Ces structures hermitiennes induisent, à l'habitude, des structures
euclidiennes sur les $\R$-espaces vectoriels $\C^S$ et $\C^\Sigma$ via la
définition des produits scalaires, si $z,w\in \C^S$, $\zeta,\omega\in\C^\Sigma$,

\begin{equation}
  \label{eq:def-pdt-scalaire}
\pdtscalaire{z}{w}:=\Re\pdthermitien{z}{w}=\Re\sum_{n\in S}z_n\overline{w}_n,\,\pdtscalaire{\zeta}{\omega}:=\Re\pdthermitien{\zeta}{\omega}=\Re\sum_{\nu\in \Sigma}\zeta_\nu\overline{\omega}_\nu\mathpunct{.}
\end{equation}
 
Pour $\AA$ l'un des deux espaces $\C^S$ ou $\C^\Sigma$ et $\BB$ idem, 
\begin{enumerate}
\item si
$F:\AA\to \BB$ est $\R$-linéaire, son adjoint $F^*:\BB\to \AA$ est
défini par, $\forall a\in \AA, b\in \BB$,
$$
\pdtscalaire{a}{F^*.b}=\pdtscalaire{F.a}{b}\mathpunct{,}
$$
\item Si $F:\AA\to \BB$ est $\C$-linéaire,  son adjoint $F^*:\BB\to \AA$ est
défini par, $\forall a\in \AA, b\in \BB$,
$$
\pdthermitien{a}{F^*.b}=\pdthermitien{F.a}{b}\mathpunct{,}
$$
\item Une application $\C$-linéaire est aussi $\R$-linéaire, les deux
  notions d'adjoint précédentes coincident.
\end{enumerate}

Rappelons les formules de projection usuelles dans un espace hermitien.
\begin{lemma}\label{lemme:projection-hermitienne}
Soit $\HH$ un $\C$-espace vectoriel muni d'un produit hermitien
$\pdthermitien{.}{.}$, pour $u\in \HH$, on note $\|u\|$
 la norme euclidienne associée sur le
$\R$-espace vectoriel $\HH$ sous-jacent.
  \begin{enumerate}
  \item Soit $u\in \HH$ un vecteur non nul et considérons  le
  sous-espace vectoriel $u^\bot:=E_u=\{v\in \HH,\, \pdthermitien{v}{u} =0\}$ ainsi que
  $\pi_u$, la projection orthogonale aux sens hermitien et euclidien sur $E_u$. On a pour $z\in \HH$,
$$
\pi_u.z = z- \pdthermitien{z}{u}\frac{u}{\|u\|^2}
$$
\item Soit $u, v\in \HH$ deux vecteurs non nuls, $\pdthermitien{u}{v}=0$ et considérons
  $\pi_{u,v}$, la projection orthogonale aux sens hermitien et
  euclidien sur $E_u\cap E_v$. On a pour $z\in \HH$,
$$
\pi_{u,v}.z = z- \pdthermitien{z}{u}\frac{u}{\|u\|^2}-\pdthermitien{z}{v}\frac{v}{\|v\|^2}
$$
  \end{enumerate}
\end{lemma}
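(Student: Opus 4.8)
The plan is to verify the two displayed formulas directly, by exhibiting in each case the orthogonal decomposition of an arbitrary $z\in\HH$ as a sum of a vector lying in the target subspace and a vector lying in its orthogonal complement; the uniqueness of that decomposition (available here since $\HH$ is finite-dimensional in all our applications, where $\HH=\C^S$ or $\C^\Sigma$) then forces the proposed operator to coincide with the orthogonal projection. Throughout I will use only that $\pdthermitien{\cdot}{\cdot}$ is $\C$-linear in its first argument and conjugate-linear in its second, that $\pdthermitien{u}{u}=\|u\|^2$, and the conjugate-symmetry $\pdthermitien{v}{u}=\overline{\pdthermitien{u}{v}}$.

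For (1), I would set $P.z=z-\pdthermitien{z}{u}\frac{u}{\|u\|^2}$ and check two things. First, $P.z\in E_u$: expanding $\pdthermitien{P.z}{u}$ and using linearity in the first slot, the scalar $\pdthermitien{z}{u}/\|u\|^2$ factors out and multiplies $\pdthermitien{u}{u}=\|u\|^2$, yielding the cancellation $\pdthermitien{z}{u}-\pdthermitien{z}{u}=0$. Second, the residual $z-P.z=\pdthermitien{z}{u}\frac{u}{\|u\|^2}$ is a scalar multiple of $u$, hence lies in $\C u=E_u^\bot$. Thus $z=P.z+(z-P.z)$ is exactly the orthogonal decomposition along $E_u\oplus E_u^\bot$, so $P=\pi_u$.

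For (2), I would set $Q.z=z-\pdthermitien{z}{u}\frac{u}{\|u\|^2}-\pdthermitien{z}{v}\frac{v}{\|v\|^2}$. The one new ingredient is the hypothesis $\pdthermitien{u}{v}=0$, equivalently $\pdthermitien{v}{u}=0$, which keeps the two correction terms from interfering: computing $\pdthermitien{Q.z}{u}$, the first two terms cancel as in (1) and the remaining cross term is proportional to $\pdthermitien{v}{u}=0$, so $Q.z\in E_u$; by symmetry $Q.z\in E_v$, whence $Q.z\in E_u\cap E_v$. The residual $z-Q.z$ lies in $\vect[\C]{u,v}$, and since every vector of $E_u\cap E_v$ is orthogonal to both $u$ and $v$, this residual lies in $(E_u\cap E_v)^\bot$. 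Uniqueness of the orthogonal decomposition again gives $Q=\pi_{u,v}$. Equivalently, one may note $\pi_{u,v}=\pi_u\circ\pi_v$, the orthogonality $\pdthermitien{u}{v}=0$ guaranteeing that $\pi_u$ leaves the correction term produced by $\pi_v$ untouched.

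There is no genuine obstacle here; the argument is a routine computation. The single point demanding care is the bookkeeping of the sesquilinear convention: the coefficients $\pdthermitien{z}{u}/\|u\|^2$ must be pulled out of the \emph{first} argument of $\pdthermitien{\cdot}{\cdot}$ without conjugation, while the vanishing of the cross term in (2) rests on reading the hypothesis $\pdthermitien{u}{v}=0$ as $\pdthermitien{v}{u}=0$ through conjugate-symmetry.
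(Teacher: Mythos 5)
Your proof is correct. Note that the paper itself offers no proof of this lemma: it is introduced by \og Rappelons les formules de projection usuelles dans un espace hermitien \fg{} and treated as a standard fact, so there is nothing to compare against; your verification (candidate operator lands in the subspace, residual lies in its orthogonal, uniqueness of the orthogonal decomposition concludes) is exactly the routine argument one would supply, with the sesquilinear bookkeeping handled correctly. One small addition worth making explicit: the lemma claims the projection is orthogonal \emph{aux sens hermitien et euclidien}, and your argument only checks Hermitian orthogonality of the residual; since $\pdtscalaire{\cdot}{\cdot}=\Re\pdthermitien{\cdot}{\cdot}$, Hermitian orthogonality implies Euclidean orthogonality by taking real parts, so the same decomposition witnesses both statements at once.
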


\subsection{Quelques applications linéaires}
Nous adoptons comme convention de désigner par la même lettre des
opérateurs ou des objets agissant sur ou appartenant à des espaces
différents pourvu que ceux-ci aient une définition similaire dans leur
cadre propre. L'examen des espaces en jeu (le typage) doit lèver les ambiguités sur 
l'opérateur ou l'objet en question.

Dans cet esprit, notons $\carac$ les vecteurs $(1)_{n\in S}\in\C^S$ et
$\carac=(1)_{\nu\in  E}\in\C^\Sigma$, 
\begin{equation}
  \label{eq:def-C0}
\C_0^S = \carac^\bot=\{z\in\C^S,\,\sum_n z_n=0\},\,
\C_0^\Sigma = \carac^\bot=\{\zeta\in\C^\Sigma,\,\sum_\nu \zeta_\nu=0\}
\mathpunct{.}
\end{equation}

Si $\pi_0$ est la projection hermitienne de $\C^S$ sur $\C_0^S$ ou de
$\C^\Sigma$ sur $\C_0^\Sigma$, on a, pour $z\in \C^S$, $\zeta\in\C^\Sigma$,
$$
\pi_0.z = z -(\frac1N \sum_n z_n)\carac,\,\pi_0.\zeta = \zeta -(\frac1N \sum_\nu \zeta_\nu)\carac
$$

Toujours dans le même esprit, 
Pour $a\in\C^S$, resp. $\alpha\in\C^\Sigma$, on définit
$\opdiag{a}$, resp. $\opdiag{\alpha}$ l'opérateur diagonal dont la
diagonale est $a$, resp. $\alpha$,
\ie, pour $z\in\C^S$, resp. $\zeta\in\C^\Sigma$,
$\opdiag{a}.z \in \C^S$, resp. $\opdiag{\alpha}.\zeta \in \C^\Sigma$
avec
$$
\forall n\in S,\,(\opdiag{a}.z)_n = a_n.z_n
\text{ et }
\forall\nu\in\Sigma,\,(\opdiag{\alpha}.\zeta)_\nu = \alpha_\nu.\zeta_\nu$$
On a 
$\opdiag{\alpha}^* = \opdiag{\overline{\alpha}}$

On définit  $C:\C^S\to \C^S$ et $C: \C^\Sigma \to \C^\Sigma$ les applications $\R$-linéaires de conjugaison
  définies par $C.(z_n)=(\overline{z_n})$ et
  $C.(\zeta_\nu)=(\overline{\zeta}_\nu)$. On a $C^*=  C$.

\subsection{L'opérateur de dérivation $D$, l'opérateur de milieu $M$}

On définit $D:\C^S\to \C^\Sigma$, $D:\C^\Sigma \to \C^S$, $M:\C^S\to \C^\Sigma$,
$M:\C^\Sigma \to \C^S$ par les formules, pour $z\in\C^S$, $\zeta\in\C^\Sigma$
\begin{eqnarray}
  \label{eq:def-D-M}
  (D.z)_{\nu}&:=& z_{\nu+\demi}-z_{\nu-\demi}\mathpunct{,}\,
  (D.\zeta)_{n}:= \zeta_{n+\demi}-\zeta_{n-\demi} \\
  (M.z)_{\nu}&:=& \frac12(z_{\nu+\demi}+z_{\nu-\demi})\mathpunct{,}\,
  (M.\zeta)_{n}:=\frac12(\zeta_{n+\demi}+\zeta_{n-\demi}) 
\end{eqnarray}
Le lemme suivant est immédiat, nous donnons les preuves de ces formules dans le paragraphe \ref{sec:preuve-lemme:prop-elem-D-M}.
\begin{lemma}\label{lemme:prop-elem-D-M}
  \begin{enumerate}
  \item $D^*=-D$, $M^*=M$,
\item $D.\pi_0=\pi_0.D=D$,
\item $M.\pi_0=\pi_0.M$,
\item $MD=DM$ et $(MD.z)_n = \frac12(z_{n+1}-z_{n-1})$, $(MD.\zeta)_\nu = \frac12(\zeta_{\nu+1}-\zeta_{\nu-1})$,
\item (Formule de \ancetre{Leibniz}), pour $\alpha\in \C^S$ ou $\alpha\in\C^\Sigma$, 
$$D.\opdiag{\alpha} = \opdiag{M.\alpha}.D+\opdiag{D.\alpha}.M$$ 
  \end{enumerate}
\end{lemma}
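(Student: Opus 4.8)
Le plan est de vérifier chacune des cinq identités par un calcul direct, l'outil essentiel étant que les applications $\nu\mapsto\nu+\demi$ et $\nu\mapsto\nu-\demi$ réalisent des bijections de $\Sigma$ sur $S$ (et inversement $n\mapsto n\pm\demi$ de $S$ sur $\Sigma$), ce qui autorise les réindexations de sommes. Chaque identité admettant une version sur $\C^S$ et une version sur $\C^\Sigma$, on les traitera de façon identique en échangeant les rôles de $S$ et $\Sigma$.

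Pour le premier point, je partirais de $\pdtscalaire{Dz}{\omega}=\Re\sum_\nu(z_{\nu+\demi}-z_{\nu-\demi})\overline{\omega}_\nu$ et je réindexerais chacune des deux sommes en posant $n=\nu+\demi$ dans la première et $n=\nu-\demi$ dans la seconde, ce qui donne $\Re\sum_n z_n(\overline{\omega}_{n-\demi}-\overline{\omega}_{n+\demi})=-\Re\sum_n z_n\overline{(D\omega)_n}=-\pdtscalaire{z}{D\omega}$, d'où $D^*=-D$. Le même calcul, avec un signe $+$ et un facteur $\frac12$, fournit $\pdtscalaire{Mz}{\omega}=\pdtscalaire{z}{M\omega}$, soit $M^*=M$.

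Pour les points 2 et 3, j'utiliserais d'abord les deux identités immédiates $D\carac=0$ et $M\carac=\carac$, ainsi que les calculs par réindexation $\sum_\nu(Dz)_\nu=0$ et $\sum_\nu(Mz)_\nu=\sum_n z_n$. De $\pi_0 z=z-(\frac1N\sum_n z_n)\carac$ on tire aussitôt $D\pi_0=D$ (car $D\carac=0$) et $\pi_0 D=D$ (car $Dz$ est déjà de somme nulle, donc fixé par $\pi_0$). Pour le point 3, les deux membres $M\pi_0 z$ et $\pi_0 Mz$ valent tous deux $Mz-(\frac1N\sum_n z_n)\carac$, le premier grâce à $M\carac=\carac$, le second grâce à $\sum_\nu(Mz)_\nu=\sum_n z_n$.

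Les points 4 et 5 sont des calculs purement locaux, sommet par arête. Pour le point 4, j'évaluerais $(MDz)_n$ et $(DMz)_n$ en développant les indices demi-entiers : puisque $(Dz)_{n+\demi}=z_{n+1}-z_n$ et $(Dz)_{n-\demi}=z_n-z_{n-1}$, on obtient $(MDz)_n=\frac12(z_{n+1}-z_{n-1})$, et de même $(DMz)_n=\frac12(z_{n+1}+z_n)-\frac12(z_n+z_{n-1})=\frac12(z_{n+1}-z_{n-1})$, d'où $MD=DM$ et la formule annoncée. Pour la formule de \ancetre{Leibniz}, j'évaluerais la composante $\nu$ de chaque membre : à gauche $\alpha_{\nu+\demi}z_{\nu+\demi}-\alpha_{\nu-\demi}z_{\nu-\demi}$, à droite $\frac12\bigl[(\alpha_{\nu+\demi}+\alpha_{\nu-\demi})(z_{\nu+\demi}-z_{\nu-\demi})+(\alpha_{\nu+\demi}-\alpha_{\nu-\demi})(z_{\nu+\demi}+z_{\nu-\demi})\bigr]$, dont le développement fait disparaître les termes croisés et redonne exactement le membre de gauche. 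La seule difficulté — si l'on peut employer ce mot — réside dans la gestion soigneuse des décalages demi-entiers et dans le suivi des bijections d'indices lors des réindexations ; aucun ingrédient conceptuel n'intervient au-delà de la structure cyclique de $S$ et $\Sigma$.
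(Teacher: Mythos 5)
Votre démonstration est correcte et suit pour l'essentiel la même démarche que celle de l'article : vérifications composante par composante et réindexations de sommes via les bijections $n\mapsto n\pm\demi$ (l'article utilise le produit hermitien pour les adjoints là où vous utilisez le produit scalaire réel, ce qui revient au même puisque $D$ et $M$ sont $\C$-linéaires, et il déduit $\pi_0.D=D$ et $\pi_0.M=M.\pi_0$ de $D^*=-D$ et $M^*=M$ là où vous les vérifiez directement par sommation télescopique). Signalons un point à votre avantage concernant la formule de \ancetre{Leibniz} : votre développement est exact --- les termes croisés s'annulent et la somme vaut bien $(D.\opdiag{\alpha}.z)_\nu=\alpha_{\nu+\demi}z_{\nu+\demi}-\alpha_{\nu-\demi}z_{\nu-\demi}$ --- alors que les dernières lignes de la preuve de l'article concluent, par une erreur de calcul, à l'identité $\opdiag{M.\alpha}.D+\opdiag{D.\alpha}.M=M.\opdiag{\alpha}$, qui contredit l'énoncé même du lemme.
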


\subsection{L'opérateur d'intégration $I$, l'opérateur $K$,
  intégration par parties}

$D$ est clairement un isomorphisme de $\C^S_0$ sur $\C^\Sigma_0$, appelons
son inverse $I$ et étendons linéairement $I$ à $\C^\Sigma$ en posant
$I.\carac = 0$.

On remarque, entre autres, que $DM=MD$, $MI=IM$. On
pose aussi
$$K=MI:\C^\Sigma\to\C^\Sigma\mathpunct{.}$$

De manière analogue, on définit $I:\C^S \to \C^\Sigma$ et
$K:\C^S\to\C^S$. 

Nous donnerons des formules explicites pour $I$ et $K$ dans le paragraphe \ref{sec:montee-gradient}.

On vérifie les faits énoncés dans le lemme suivant au paragraphe \ref{sec:preuve-lemme:prop-I-K}, l'intégration par parties étant vue comme une version duale de la formule de \ancetre{Leibniz}.
\begin{lemma}\label{lemme:prop-I-K}
  \begin{enumerate}
  \item $I^*=-I$, $K^*=-K$,
  \item\label{lemme:prop-I-K:inversion-D} $D.I=I.D=\pi_0$,
  \item $D.\opdiag{\alpha}.I = \opdiag{M.\alpha}.\pi_0 + \opdiag{D.\alpha}K$,
  \item (Intégration par parties) 
    $$I.\opdiag{\alpha}.D = \pi_0\opdiag{M.\alpha} - K.\opdiag{D.\alpha}\mathpunct{.}$$
  \end{enumerate}
\end{lemma}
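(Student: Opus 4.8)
The plan is to establish the four items in the order (2), (1), (3), (4), each one reducing to the previous ones together with Lemme~\ref{lemme:prop-elem-D-M} and the adjoint rules $\opdiag{\alpha}^*=\opdiag{\overline\alpha}$, $C^*=C$, $\pi_0^*=\pi_0$. Throughout, the single point of vigilance is the naming convention: $D$, $M$, $I$, $K$ each denote two operators (one $\C^S\to\C^\Sigma$, one $\C^\Sigma\to\C^S$), and passing to adjoints systematically swaps the two versions. Keeping track of the types at each step is really the only bookkeeping, and the one place a sign error could creep in.

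First I would prove item~(2), $D.I=I.D=\pi_0$, directly from the definition of $I$. Using the orthogonal decomposition $\C^\Sigma=\C_0^\Sigma\oplus\C.\carac$ (and likewise for $\C^S$): on $\C_0^\Sigma$ the operator $I$ is by construction the inverse of the isomorphism $D\colon\C_0^S\to\C_0^\Sigma$, so $D.I$ is the identity there, which coincides with $\pi_0$; on $\C.\carac$ we have $I.\carac=0=\pi_0.\carac$. Hence $D.I=\pi_0$. For $I.D=\pi_0$ I note that $D.\carac=0$, so for any $z$ one has $D.z=D.\pi_0 z\in\C_0^\Sigma$, whence $I.D.z=I.D.\pi_0 z=\pi_0 z$ because $\pi_0 z\in\C_0^S$ and $I$ inverts $D$ on $\C_0^S$.

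Next, item~(1). Since $I$ kills $\carac$ and lands in $\C_0$, it factors as $I=\pi_0 I=I\pi_0$, i.e. it is the extension by zero on $\C.\carac$ of the inverse of the restricted isomorphism $D_0:=D|_{\C_0}$. By Lemme~\ref{lemme:prop-elem-D-M} we have $D^*=-D$ and $D\pi_0=\pi_0 D$, so on the Hermitian subspace $\C_0$ (which is orthogonal to $\C.\carac$) the restriction $D_0$ still satisfies $D_0^*=-D_0$; taking inverses, $(D_0^{-1})^*=(D_0^*)^{-1}=-D_0^{-1}$, and extending by zero gives $I^*=-I$. For $K=MI$ I would then use $K^*=I^*M^*=(-I)M=-IM$ together with the commutation $MI=IM$, itself a consequence of $MD=DM$ and of the fact that $M$ preserves both $\C_0$ and $\C.\carac$ (from $M\pi_0=\pi_0 M$ and $M.\carac=\carac$); this yields $K^*=-MI=-K$.

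Item~(3) is then immediate from the Leibniz formula $D.\opdiag{\alpha}=\opdiag{M.\alpha}.D+\opdiag{D.\alpha}.M$ of Lemme~\ref{lemme:prop-elem-D-M}: composing on the right with $I$ and substituting $D.I=\pi_0$ (item~2) and $M.I=K$ (definition of $K$) gives exactly $D.\opdiag{\alpha}.I=\opdiag{M.\alpha}.\pi_0+\opdiag{D.\alpha}.K$. Finally, item~(4) is the adjoint of item~(3): the left side gives $(D.\opdiag{\alpha}.I)^*=I^*\opdiag{\overline\alpha}D^*=I.\opdiag{\overline\alpha}.D$ by item~(1) and $D^*=-D$, while the right side gives $\pi_0\opdiag{\overline{M\alpha}}-K\opdiag{\overline{D\alpha}}$ using $\pi_0^*=\pi_0$, $K^*=-K$. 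Since conjugation commutes with the real-coefficient operators, $\overline{M\alpha}=M\overline\alpha$ and $\overline{D\alpha}=D\overline\alpha$, so $I.\opdiag{\overline\alpha}.D=\pi_0\opdiag{M\overline\alpha}-K\opdiag{D\overline\alpha}$; as $\alpha\mapsto\overline\alpha$ is onto, replacing $\overline\alpha$ by $\alpha$ yields the stated integration-by-parts formula. The whole argument is routine operator algebra; the hard part, such as it is, is purely clerical — remembering that each adjoint swaps the $\C^S\to\C^\Sigma$ and $\C^\Sigma\to\C^S$ incarnations of $I$, $D$, $K$, so I would name the type of each operator explicitly wherever an adjoint is taken.
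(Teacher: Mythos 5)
Your proof is correct and follows essentially the same route as the paper: item (2) via the decomposition $\C_0\oplus\vect{\carac}$, item (3) by composing the Leibniz formula on the right with $I$ and using $D.I=\pi_0$, $M.I=K$, and item (4), exactly as in the paper, by taking adjoints of item (3) and substituting $\alpha$ for $\overline{\alpha}$ (your remark that $\overline{M.\alpha}=M.\overline{\alpha}$, $\overline{D.\alpha}=D.\overline{\alpha}$ is even slightly more careful than the paper's). The only cosmetic difference is item (1), where you obtain $I^*=-I$ abstractly from $(D_0^{-1})^*=(D_0^*)^{-1}$ for the restricted isomorphism $D_0$ and extension by zero, whereas the paper does a direct inner-product computation; both arguments rest on the same two facts, $D^*=-D$ and the orthogonal splitting $\C^S=\C_0^S\oplus\vect{\carac}$.
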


\subsection{Quelques résolutions de systèmes linéaires.}\label{sec:resolution-sys}
Nous allons résoudre des systèmes linéaires d'un type particulier, se
présentant comme un ensemble d'équations linéaires en des inconnues
complexes et leurs conjuguées. A titre d'exemple introductif nous
retrouvons la formule donnant l'affixe du cercle circonscrit à trois
points non alignés du plan. 

\begin{proposition}\label{prop:centre-CC}
  Soit $u,v,w\in\C$ trois points non alignés. Posons
    $$c=c(u,v,w)=\frac{1}{w-u}\left(\frac{w-v}{\overline{w-v}}-\frac{u-v}{\overline{u-v}}\right)\mathpunct{.}$$
Le centre du cercle  circonscrit à $u,v,w$ est
    $$o=v+\frac{c}{|c|^2}$$ 
\end{proposition}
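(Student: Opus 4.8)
The plan is to characterise the circumcentre $o$ as the unique point equidistant from the three vertices, that is by the two equalities $|o-u|^2=|o-v|^2$ and $|o-v|^2=|o-w|^2$, and to turn these into a linear system whose unknowns are $o$ and its conjugate $\overline{o}$, treated as formally independent. Expanding each squared modulus as $|o-u|^2=|o|^2-o\overline{u}-\overline{o}u+|u|^2$ and cancelling the common term $|o|^2$, the two equalities become
\[
(\overline{v}-\overline{u})\,o+(v-u)\,\overline{o}=|v|^2-|u|^2,\qquad(\overline{w}-\overline{v})\,o+(w-v)\,\overline{o}=|w|^2-|v|^2.
\]
This is precisely a system of the type announced at the start of the subsection: the coefficient of $\overline{o}$ in each row is the conjugate of the coefficient of $o$, and the right-hand sides are real. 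That conjugate symmetry is exactly what guarantees that the solution $(o,\overline{o})$ of the $2\times 2$ complex system is genuinely a pair made of a number and its conjugate, so that solving the formally decoupled system is legitimate.

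First I would compute the determinant of the system,
\[
\delta=(\overline{v}-\overline{u})(w-v)-(v-u)(\overline{w}-\overline{v})=2i\,\Im\big((\overline{v}-\overline{u})(w-v)\big),
\]
the second form following because the two products are conjugate to one another. This number vanishes if and only if $u$, $v$, $w$ are aligned, so under the hypothesis of the proposition $\delta\neq 0$; the system then has a unique solution given by Cramer's rule. This already yields the existence and uniqueness of the circumcentre together with a closed expression for $o$.

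The remaining, and main, task is to reconcile that Cramer output with the compact form $o=v+c/|c|^2$. Since translating $(u,v,w)$ by $-v$ shifts $o$ by $-v$ while leaving $c$ unchanged (it depends only on the differences $u-v$, $w-v$, $w-u$), it suffices to treat the case $v=0$. Writing $a=u-v$ and $b=w-v$, a short computation gives $c=\dfrac{1}{b-a}\big(\tfrac{b}{\overline{b}}-\tfrac{a}{\overline{a}}\big)=\dfrac{2i\,\Im(b\overline{a})}{(b-a)\,\overline{a}\,\overline{b}}$, so that $c/|c|^2=1/\overline{c}$, all of these expressions being well defined because $a$, $b$, $b-a$ are nonzero and $\Im(b\overline{a})\neq 0$. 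I would then finish by checking that $o'=1/\overline{c}$ satisfies the two equidistance relations $2\Re(\overline{o'}a)=|a|^2$ and $2\Re(\overline{o'}b)=|b|^2$, substituting the expression for $1/\overline{c}$ and simplifying with the identity $b\overline{a}-a\overline{b}=2i\,\Im(b\overline{a})$. The only genuine obstacle here is bookkeeping: both $\delta$ and $c$ encode the same quantity $\Im(b\overline{a})=\Im\big((w-v)\overline{(u-v)}\big)$ but in different arrangements, so the cleanest route is to reduce to $v=0$ and verify the two scalar conditions directly rather than to grind the raw Cramer fractions into the stated shape.
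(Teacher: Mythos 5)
Your argument is correct and follows essentially the same route as the paper: both characterise $o-v$ by the two perpendicular-bisector (equivalently, equidistance) conditions, rewrite them as a $2\times 2$ linear system in the pair $(o-v,\overline{o-v})$, and use the non-collinearity of $u,v,w$ to see that its determinant $2i\,\Im\bigl((\overline{v}-\overline{u})(w-v)\bigr)$ is nonzero. The only cosmetic difference is that the paper extracts $\overline{o-v}=1/c$ directly by Cramer's rule, whereas you verify a posteriori that $1/\overline{c}=c/|c|^2$ satisfies the two relations; both computations are sound.
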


\begin{proof}
  Le centre $o$ du cercle circonscrit est l'intersection des
  médiatrices des segments $[u,v]$ et $[w,v]$. $t=o-v$ est donc
  solution du système
$$
\left\{
  \begin{array}{lcc}
(t-\frac{u-v}{2}).\overline{(u-v)} + \overline{(t-\frac{u-v}{2})}.(u-v)&=& 0 \\    
(t-\frac{w-v}{2}).\overline{(w-v)} + \overline{(t-\frac{w-v}{2})}.(w-v)&=& 0 \\    
  \end{array}
\right.
$$
La première équation étant l'expression de l'orthogonalité des
vecteurs $o-\frac{u+v}{2}$ et $u-v$, la seconde, celle de l'orthogonalité des
vecteurs $o-\frac{w+v}{2}$ et $w-v$.

Après simplification, en posant, $s=\overline{t}$, il s'agit de
résoudre le système linéaire d'inconnues $(t,s)$
$$
\left\{
  \begin{array}{lcc}
t.\overline{(u-v)} + s.(u-v)&=& |u-v|^2 \\    
t.\overline{(w-v)} + s.(w-v)&=& |w-v|^2 \\    
  \end{array}
\right.
$$
Ce sytème $2\times 2$ est de déterminant
$\Delta=\overline{(u-v)}.(w-v)-\overline{(w-v)}(u-v)$, non nul du fait
du non alignement de $u,v,w$ et les formules de \ancetre{Cramer}
donnent
$$
t=\frac{\overline{(u-v)}.(u-v).(w-v)-\overline{(w-v)}.(w-v)(u-v)}{\Delta}
=\frac{(u-v).(w-v).\overline{(u-w)}}{\overline{(u-v)}.(w-v)-\overline{(w-v)}(u-v)}
$$
et
$$
s=\overline{t}=\frac1{c}
$$
\end{proof}

Nous élaborons maintenant sur ce thème en augmentant le nombre de variables.

Le lemme suivant est un simple exercice d'algèbre linéaire élémentaire, on peut trouver sa démonstration au paragraphe \ref{sec:lemme:resolution-systeme}.
\begin{lemma}[Résolution de systèmes linéaires particuliers]\label{lemme:resolution-systeme}
  Soient $n$ et $p$ deux entiers naturels et soit $A$ une matrice à
  coefficients complexes\footnote{Pour une matrice, ou un vecteur $A$ à coefficients complexes, on note
$\overline{A}$ la matrice ou le vecteur dont les entrées sont les
conjuguées des entrées de $A$.}, $p$ lignes, $n$ colonnes.
On considère les ensembles suivants 
\begin{eqnarray*}
  \TT&=&\{T\in\C^n,\,A.T+\overline{A}.\overline{T}=0\},\\
  \TT_0&=&\{(T,S)\in\C^n\times\C^n,\,A.T+\overline{A}.S=0\},\\
  \TT_R&=&\{(T,S)\in\C^n\times\C^n,\,A.T+\overline{A}.S=0,T=\overline{S}\},\\
  \TT_I&=&\{(T,S)\in\C^n\times\C^n,\,A.T+\overline{A}.S=0,T=-\overline{S}\},
\end{eqnarray*}
alors 
\begin{enumerate}
\item $\TT_0$ est un $\C$-sev de $\C^n\times\C^n$, $\TT_R$ et $\TT_I$
  sont des $\R$-sev de $\C^n\times\C^n$, $\TT$ est un $\R$-sev de
  $\C^n$,
\item $\TT_0=\TT_R \oplus \TT_I$, au sens des $\R$-sous-espaces de $\C^n\times\C^n$, avec projections associées\footnote{On note $\pi_{R//I}$ la projection sur $\TT_R$ parallèlement $\TT_I$ et $\pi_{R//I}$ la projection sur $\TT_I$ parallèlement $\TT_R$}
$$\pi_{R//I}(T,S)=\frac12(T+\overline{S},\overline{T}+S),\,
\pi_{I//R}(T,S)=\frac12(T-\overline{S},-\overline{T}+S).$$

\item L'application $\TT_R\to\TT_I$, $(T,S)\mapsto (i.T,i.S)$ est un isomorphisme.
L'application $\TT_R\to \TT$, $(T,S)\mapsto T$ est
  un isomorphisme.
\item En particulier, $\dim_\R \TT = \dim_\R \TT_r = \frac12\dim_\R \TT_0=\dim_\C \TT_0$.
\item Si $J$ et $Q$ sont deux matrices telles que
$$\TT_0=\{(T,S)\in\C^n\times\C^n,T=J.S,Q.S=0\}$$
alors $$\TT=\{J.S+\overline{S},\, S\in\C^n, Q.S=0\}=(J+C)(\kernel{Q})$$
et $\dim_\R \TT= \dim_\C\kernel{Q}$.
\end{enumerate}
\end{lemma}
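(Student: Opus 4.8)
The whole lemma rests on a single observation: the defining relation of $\TT$, namely $A.T+\overline{A}.\overline{T}=0$, is only $\R$-linear in $T$ because of the conjugation, but it becomes genuinely $\C$-linear once one \emph{decouples} $T$ and $\overline{T}$ into two independent complex unknowns $(T,S)$ --- which is precisely the passage from $\TT$ to $\TT_0$. So the plan is to carry out the linear algebra on the comfortable $\C$-space $\TT_0$ and then transport every conclusion back to $\TT$. For item (1) I would simply note that $(T,S)\mapsto A.T+\overline{A}.S$ is $\C$-linear, so $\TT_0$ is its kernel and hence a $\C$-subspace, whereas the supplementary constraints $T=\overline{S}$, $T=-\overline{S}$ and the relation defining $\TT$ all involve conjugation and are only $\R$-linear; this forces $\TT_R$, $\TT_I$ and $\TT$ to be $\R$-subspaces.

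For item (2) the strategy is purely computational: verify that the two announced maps are indeed the projections. I would check three points. First, $\pi_{R//I}+\pi_{I//R}=\id$, which is immediate. Second, that $\pi_{R//I}$ sends $\TT_0$ into $\TT_R$: the constraint on the image is built in by construction, and membership in the kernel follows from the identity $A.\overline{S}+\overline{A}.\overline{T}=\overline{A.T+\overline{A}.S}$, since $A.\frac12(T+\overline{S})+\overline{A}.\frac12(\overline{T}+S)=\frac12\big((A.T+\overline{A}.S)+\overline{(A.T+\overline{A}.S)}\big)=0$ on $\TT_0$; the sign-flipped version places $\pi_{I//R}$ in $\TT_I$. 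Third, that each map is the identity on its target and vanishes on the complementary one (equivalently $\TT_R\cap\TT_I=\{0\}$, since $T=\overline{S}=-\overline{S}$ forces $T=S=0$). These together yield the direct sum.

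Items (3) and (4) are then formal. Multiplication by $i$ is $\R$-linear, preserves membership in $\TT_0$, and turns $T=\overline{S}$ into $T=-\overline{S}$ (because $\overline{iS}=-i\overline{S}$), so it is an $\R$-isomorphism $\TT_R\to\TT_I$ with inverse multiplication by $-i$. The first-coordinate map $\TT_R\to\TT$ is injective (on $\TT_R$ one has $S=\overline{T}$, so $T$ determines the pair) and surjective (for $T\in\TT$ the pair $(T,\overline{T})$ lies in $\TT_R$), hence an $\R$-isomorphism. The dimension chain of item (4) drops out: the direct sum gives $\dim_\R\TT_0=\dim_\R\TT_R+\dim_\R\TT_I$, the two isomorphisms give $\dim_\R\TT_I=\dim_\R\TT_R=\dim_\R\TT$, and $\dim_\R\TT_0=2\dim_\C\TT_0$ since $\TT_0$ is a genuine $\C$-space.

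Item (5) is where the two viewpoints must be matched, and it is the step I would treat most carefully. From $\TT_0=\{(J.S,S):Q.S=0\}$ the map $(T,S)\mapsto S$ is a $\C$-isomorphism $\TT_0\to\kernel{Q}$, so $\dim_\C\TT_0=\dim_\C\kernel{Q}$ and the dimension claim follows from item (4). For the explicit description of $\TT$, I would push $\TT_0$ through the composite $(T,S)\mapsto\pi_{R//I}(T,S)\mapsto\frac12(T+\overline{S})$, which by items (2)--(3) surjects onto $\TT$; substituting $T=J.S$ gives $\TT=\{\frac12(J.S+\overline{S}):Q.S=0\}$, and rescaling $S\mapsto 2S$ (legitimate since $\kernel{Q}$ is $\C$-stable) clears the factor $\frac12$, yielding $\TT=\{J.S+\overline{S}:Q.S=0\}=(J+C)(\kernel{Q})$. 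The only genuine pitfalls here are the scalar normalization and confirming that this composite maps \emph{onto} $\TT$ and not merely into it; both are settled by the fact, established in item (2), that $\pi_{R//I}$ is the identity on $\TT_R$ together with the surjectivity of the first-coordinate map from item (3).
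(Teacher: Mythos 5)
Votre démonstration est correcte et suit essentiellement la même démarche que celle de l'article : noyaux d'applications $\C$- ou $\R$-linéaires pour (1), vérification directe que $\pi_{R//I}$ et $\pi_{I//R}$ sont les projections de la somme directe pour (2), isomorphismes explicites pour (3)--(4), puis transport de la description paramétrique de $\TT_0$ vers $\TT$ via $\pi_{R//I}$ et la première coordonnée pour (5). Votre seul apport propre est de traiter explicitement le facteur $\frac12$ dans le point (5) par le changement d'échelle $S\mapsto 2S$, là où l'article l'absorbe tacitement en utilisant la stabilité de $\TT_0$ par homothétie.
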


\subsection{Définition de $J_\alpha$.}\label{section:def-Jalpha}

  On pose, pour $\alpha\in(\S^1)^\Sigma$,
  \begin{equation}
    \label{eq:def-Jalpha}
J_\alpha = I.\opdiag{\alpha}.D    
  \end{equation}

L'application directe des lemmes \ref{lemme:resolution-systeme} et \ref{lemme:prop-I-K}.(\ref{lemme:prop-I-K:inversion-D}) donne
immédiatement\footnote{Il suffit de poser
  $\TT_0=\{(T,S)\in\C^S\times\C^S,\,\opdiag{\beta}D.T+\opdiag{\overline{\beta}}.D.S=0,
  \pdthermitien{T}{\carac}+\pdthermitien{S}{\carac}=0,\pdthermitien{T}{\carac}-\pdthermitien{S}{\carac}=0\}$ avec  $\beta_\nu$, une racine carrée de $\alpha_\nu$.}
\begin{lemma}\label{lemme:solutions-systeme-tangent} Soit $\alpha=(\alpha_\nu)\in(\S^1)^\Sigma$, 
  \begin{enumerate}
  \item L'ensemble des solutions du  système $\R$-linéaire
$$
\left\{ 
  \begin{array}{ll}
t_{\nu{+\demi}}-t_{\nu{-\demi}}+\alpha_\nu(\overline{t}_{\nu{+\demi}}-\overline{t}_{\nu{-\demi}})=0
&    \forall \nu\in \Sigma \\
\sum_n t_n = 0
  \end{array}
\right.
$$
d'inconnue $t=(t_n)\in\C^S$, est le $\R$-sev de $\C^S$
\begin{eqnarray*}
\TT_\alpha&=&\{ 
t = -J_\alpha.s+\overline{s}, s=(s_n)\in\C^S,\, \sum_n
s_n=0,\sum_n (\alpha_{n{+\demi}}-\alpha_{n{-\demi}})s_n =0
\}\\
&=& (-J_\alpha+C)(\carac^\bot \cap (D.\overline{\alpha})^\bot)
\end{eqnarray*}  
\item La dimension de $\TT_\alpha$ (sur $\R$) est la dimension (sur $\C$) de 
$$\SS_\alpha:=\{s\in\C^S,\,\sum_n
s_n=0,\sum_n (\alpha_{n{+\demi}}-\alpha_{n{-\demi}})s_n =0\}=\carac^\bot \cap (D.\overline{\alpha})^\bot$$
\begin{enumerate}
\item Elle vaut $N-1$ si les $\alpha_\nu$ sont tous égaux,
\item elle vaut $N-2$ sinon.
\end{enumerate}
  \end{enumerate}
\end{lemma}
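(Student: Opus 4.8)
The plan is to recognize the system as a concrete instance of the abstract situation treated in Lemma \ref{lemme:resolution-systeme}, and then to read off the parametrization of assertion (1) and the dimension of assertion (2) from its parts (5) and (4). First I would put the $N$ edge equations in operator form: since $(D t)_\nu = t_{\nu+\demi}-t_{\nu-\demi}$ and conjugation commutes with $D$, they read $D t + \opdiag{\alpha}\,\overline{D t}=0$, while the centering condition is $t\in\C_0^S$, that is $\pdthermitien{t}{\carac}=0$. To reach the shape $A T+\overline{A}\,\overline{T}=0$ demanded by Lemma \ref{lemme:resolution-systeme}, I would pick for each $\nu$ a unit square root $\beta_\nu$ of $\alpha_\nu$ and multiply the $\nu$-th equation by the nonzero scalar $\overline{\beta}_\nu$; using $|\beta_\nu|=1$ this turns the system into $\opdiag{\overline{\beta}}D t+\opdiag{\beta}\,\overline{D t}=0$, which is exactly $A t+\overline{A}\,\overline{t}=0$ with $A=\opdiag{\overline{\beta}}D$. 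Encoding the single complex equation $\pdthermitien{t}{\carac}=0$ by the two $\C$-linear constraints $\pdthermitien{T}{\carac}\pm\pdthermitien{S}{\carac}=0$ on the doubled unknown $(T,S)$, as suggested in the footnote, then displays the solution set as the real space $\TT$ associated to the complex subspace $\TT_0\subset\C^S\times\C^S$ of Lemma \ref{lemme:resolution-systeme}.

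The heart of the matter is to put $\TT_0$ into the normal form $\TT_0=\{(T,S):T=JS,\ QS=0\}$ of part (5). From $\opdiag{\overline{\beta}}DT+\opdiag{\beta}DS=0$, using $1/\beta_\nu=\overline{\beta}_\nu$ and $\beta_\nu^2=\alpha_\nu$, I obtain $DT=-\opdiag{\alpha}DS$. Two facts from the earlier lemmas then do all the work. Pairing this identity with $\carac$ and invoking $D\carac=0$ together with $D^*=-D$ forces $\pdthermitien{S}{D\overline{\alpha}}=0$ automatically, so that no information is lost in recording it as a separate constraint on $S$. Next, on $\pdthermitien{T}{\carac}=0$ one has $T\in\C_0^S$, whence $T=IDT=-I\opdiag{\alpha}DS=-J_\alpha S$ by $ID=\pi_0$; conversely $T=-J_\alpha S$ lies in $\image{I}=\C_0^S$ and, as soon as $\opdiag{\alpha}DS\in\C_0^\Sigma$ (that is $S\in(D\overline{\alpha})^\bot$), returns $DT=-\opdiag{\alpha}DS$. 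Gathering the two $\carac$-constraints into $\pdthermitien{S}{\carac}=0$, this identifies $J=-J_\alpha$ and $\kernel{Q}=\carac^\bot\cap(D\overline{\alpha})^\bot=\SS_\alpha$. Part (5) of Lemma \ref{lemme:resolution-systeme} then delivers at once $\TT_\alpha=(-J_\alpha+C)(\SS_\alpha)$, that is $t=-J_\alpha s+\overline{s}$ with $s\in\SS_\alpha$, and $\dim_\R\TT_\alpha=\dim_\C\SS_\alpha$, which is assertion (1) together with the opening statement of assertion (2).

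It then remains to compute $\dim_\C\SS_\alpha$, where $\SS_\alpha$ is the intersection of the two complex hyperplanes $\carac^\bot$ and $(D\overline{\alpha})^\bot$. The decisive remark is that $D\overline{\alpha}\in\C_0^S=\carac^\bot$, because $\sum_n(D\overline{\alpha})_n=\sum_n(\overline{\alpha}_{n+\demi}-\overline{\alpha}_{n-\demi})$ telescopes to $0$ around the cycle. Hence $\carac$ and $D\overline{\alpha}$ are $\C$-dependent precisely when $D\overline{\alpha}=0$, and $D\overline{\alpha}=0$ means $\overline{\alpha}_{n+\demi}=\overline{\alpha}_{n-\demi}$ for every $n$, which by connectedness of the cyclic graph amounts to all the $\alpha_\nu$ being equal. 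Thus $\dim_\C\SS_\alpha=N-1$ when the $\alpha_\nu$ coincide (a single effective constraint) and $N-2$ otherwise (two independent constraints), giving (2a) and (2b).

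I expect the main obstacle to be the bookkeeping in the normal-form step: one must check that $T=-J_\alpha S$ with $S\in\SS_\alpha$ is genuinely equivalent to the original edge equations, not merely implied by them, and in particular that the auxiliary condition $S\in(D\overline{\alpha})^\bot$ is enforced by the system itself rather than added by hand. The pairing with $\carac$ using $D\carac=0$ and $D^*=-D$ is what settles this cleanly. The one place where a conjugation slip is easy is the choice between multiplying by $\beta_\nu$ or $\overline{\beta}_\nu$, equivalently between $\alpha$ and $\overline{\alpha}$: only the normalization $A=\opdiag{\overline{\beta}}D$ reproduces the operator $J_\alpha$ of the statement rather than $J_{\overline{\alpha}}$, so a direct verification that $-J_\alpha s+\overline{s}$ indeed solves the system for $s\in\SS_\alpha$ is a worthwhile safeguard.
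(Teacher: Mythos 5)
Your proposal is correct and follows precisely the route the paper itself takes: its entire proof of this lemma is the footnote prescribing the doubled system $\TT_0$ built from square roots $\beta_\nu$ of the $\alpha_\nu$, to which Lemme~\ref{lemme:resolution-systeme}(5) and Lemme~\ref{lemme:prop-I-K}(\ref{lemme:prop-I-K:inversion-D}) are applied. You merely make explicit the normal-form step $T=-J_\alpha S$ with $S\in\SS_\alpha$ and the dimension count via $D\overline{\alpha}\in\carac^\bot$, details the paper declares immediate (your verification even corrects the footnote's conjugation convention for $\beta_\nu$).
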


\section{Une représentation paramétrique de l'espace tangent $\TT_z$}\label{sec:rep-param-Tz}
Pour un polygone $z\in \PP_0(\ell)$, posons $\zeta=D.z$ et $\alpha\in
\C^\Sigma$ défini par 
\begin{equation}
  \label{eq:def-alpha}
\alpha_\nu = \frac{\zeta_\nu}{\overline{\zeta}_\nu},\,\forall \nu\in \Sigma  
\end{equation}
$\alpha$ est bien défini car, pour $\nu\in \Sigma$, $|\zeta_\nu|=\ell_\nu >
0$.

Les fonctions $L^2_\nu$ ont été définies en \eqref{eq:def-Lnu}. Pour
$\nu\in\Sigma$, en utilisant les règles de calcul différentiel rappelées au paragraphe \ref{sec:calc-diff-rappels}, comme pour $\nu\in\sigma$, 
$$
L^2_\nu(z)=|z_{\nu+\demi}-z_{\nu-\demi}|^2= (z_{\nu+\demi}-z_{\nu-\demi}).\overline{(z_{\nu+\demi}-z_{\nu-\demi})}
\mathpunct{,}
$$
la différentielle de $L^2_\nu$ en $z$ vaut
\begin{equation}
  \label{eq:diff-L}
d_z L^2_\nu = \overline{z_{\nu{+\demi}}-z_{\nu{-\demi}}}(dz_{\nu{+\demi}}-dz_{\nu{-\demi}})+(z_{\nu{+\demi}}-z_{\nu{-\demi}})(d\overline{z}_{\nu{+\demi}}-d\overline{z}_{\nu{-\demi}})  
\end{equation}

Comme 
$$\PP_0(\ell)=\{z\in\C^S, L^2_\nu(z)=\ell_\nu^2,\forall\nu\in \Sigma\}\cap \carac^\bot$$
Son espace tangent en $z$, au sens réel algébrique,  est 
\begin{equation}
  \label{eq:cartesienne-Tz}
\TT_z := \{(t_n)\in\C^S,\, \overline{z_{\nu{+\demi}}-z_{\nu{-\demi}}}(t_{\nu{+\demi}}-t_{\nu{-\demi}})+(z_{\nu{+\demi}}-z_{\nu{-\demi}})(\overline{t}_{\nu{+\demi}}-\overline{t}_{\nu{-\demi}})=0,\forall\nu\in \Sigma\}\cap\carac^\bot
\end{equation}
D'après le lemme \ref{lemme:solutions-systeme-tangent}, en posant 
$$
\SS_z := \{ s=(s_n)\in\C^S,\, \sum_n
s_n=0,\sum_n (\alpha_{n{+\demi}}-\alpha_{n{-\demi}})s_n =0
\}
\mathpunct{,}
$$ 
on a
\begin{equation}
  \label{eq:parametrage-Tz}
\TT_z = \{ -J_\alpha.s+\overline{s}, s=\SS_z\}=(-J_\alpha+C)(\SS_z)
\end{equation}
où $J_\alpha$ est défini dans le paragraphe \ref{section:def-Jalpha}.
$\TT_z$ est de dimension réelle $N-1$ si les $\alpha_\nu$ sont tous égaux,
$N-2$ sinon.

Le fait que les $\alpha_\nu$ soient tous égaux équivaut au fait qu'il
existe un nombre complexe de module $1$, $e^{i\theta_0}$ tel que, pour
tout $\nu\in \Sigma$, $z_{\nu{+\demi}}-z_{\nu{-\demi}} = \epsilon_\nu\ell_\nu
e^{i\theta_0}$ avec $\epsilon_\nu=\pm1$. Ceci équivaut au fait que
d'une part 
\begin{equation}
  \label{eq:condition-P-var-diff}
  \sum_\nu \epsilon_\nu\ell_\nu = 0
\end{equation}
et d'autre part les point $z_n$ sont alignés sur une droite.

On résume ce qui vient d'être dit sous la forme suivante.
\begin{proposition}
  \label{prop:Pl-variete}
Dans tous les cas, si $\ell$ est admissible, les polygones {\it  colinéaires}\footnote{Un polygone est colinéaire si tous les points sont alignés.}
sont les points singuliers de $\PP_0(\ell)$.
  \begin{enumerate}
  \item Si, pour tout $\epsilon\in\{-1,+1\}^\Sigma$, $\sum_\nu
    \epsilon_\nu.\ell_\nu\not=0$, alors tout point
    $z$ de $\PP_0(\ell)$ est \emph{régulier}. $\PP_0(\ell)$ est une sous-variété
    différentiable réelle de $\C^S$ de dimension $N-2$. 
\item Si, par contre, il existe un $\Sigma$-uplet
  $(\epsilon_\nu)=(\pm1)_\nu$ tel que $\sum_\nu
    \epsilon_\nu.\ell_\nu=0,$ 
il y a au moins un point singulier dans $\PP_0(\ell)$.
\end{enumerate}
\end{proposition}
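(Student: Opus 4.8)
Le plan est de lire le résultat directement sur le calcul de dimension du lemme~\ref{lemme:solutions-systeme-tangent}, combiné à l'interprétation géométrique de la quantité $\alpha$ définie en \eqref{eq:def-alpha}. Rappelons qu'un point $z\in\PP_0(\ell)$ est \emph{singulier} exactement lorsque $\dim_\R\TT_z>N-2$, et que, d'après \eqref{eq:parametrage-Tz} et le lemme~\ref{lemme:solutions-systeme-tangent}, la seule alternative à $\dim_\R\TT_z=N-2$ est $\dim_\R\TT_z=N-1$, cas qui se produit si et seulement si les $\alpha_\nu$ sont tous égaux. La première tâche est donc d'établir la chaîne d'équivalences
\[
z \text{ singulier} \iff \text{les } \alpha_\nu \text{ sont tous égaux} \iff z \text{ colinéaire,}
\]
qui est précisément celle développée dans le paragraphe précédant l'énoncé~: en écrivant $\zeta_\nu=\ell_\nu e^{i\phi_\nu}$ on a $\alpha_\nu=e^{2i\phi_\nu}$, de sorte que les $\alpha_\nu$ sont tous égaux si et seulement si les $\phi_\nu$ coïncident modulo $\pi$, si et seulement si chaque côté $\zeta_\nu=\epsilon_\nu\ell_\nu e^{i\theta_0}$ est parallèle à une direction fixe $e^{i\theta_0}$ (avec $\epsilon_\nu=\pm1$), si et seulement si les points $z_n$ sont alignés. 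Ceci règle la première assertion de la proposition et montre de plus qu'un polygone colinéaire de $\PP_0(\ell)$ impose, via la relation de fermeture $\sum_\nu\zeta_\nu=0$ (automatique puisque $\zeta=D.z$), la condition de signe $e^{i\theta_0}\sum_\nu\epsilon_\nu\ell_\nu=0$, soit $\sum_\nu\epsilon_\nu\ell_\nu=0$.

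Pour l'assertion (1), je raisonnerais par contraposée sur l'existence de points singuliers. Si $\sum_\nu\epsilon_\nu\ell_\nu\neq0$ pour tout $\epsilon\in\{-1,+1\}^\Sigma$, alors, d'après le paragraphe précédent, $\PP_0(\ell)$ ne contient aucun polygone colinéaire, donc aucun point singulier~: tout $z$ vérifie $\dim_\R\TT_z=N-2$, ce qui signifie exactement que les $N+2$ différentielles $d_zL^2_\nu$ ($\nu\in\Sigma$), $d_z\Re(\sum_n z_n)$ et $d_z\Im(\sum_n z_n)$ sont linéairement indépendantes en chaque point. Autrement dit, $(\ell_\nu^2)_\nu$ est une valeur régulière de l'application $\Phi:z\mapsto((L^2_\nu(z))_\nu,\Re(\sum_n z_n),\Im(\sum_n z_n))$, et le théorème de préimage rappelé au paragraphe~\ref{sec:calc-diff-rappels} identifie $\PP_0(\ell)=\Phi^{-1}((\ell_\nu^2)_\nu,0,0)$ à une sous-variété différentielle réelle de $\C^S$, de dimension $2N-(N+2)=N-2$.

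Pour l'assertion (2), j'exhiberais un point singulier par la construction réciproque. Étant donné $\epsilon\in\{-1,+1\}^\Sigma$ avec $\sum_\nu\epsilon_\nu\ell_\nu=0$, je poserais $\zeta_\nu:=\epsilon_\nu\ell_\nu\in\R$~; alors $\sum_\nu\zeta_\nu=0$, donc $\zeta\in\C_0^\Sigma$ et $z:=I.\zeta\in\C_0^S$ est un polygone centré bien défini vérifiant $D.z=\zeta$ (lemme~\ref{lemme:prop-I-K}), d'où $|(D.z)_\nu|=\ell_\nu$ pour tout $\nu$ et $z\in\PP_0(\ell)$. Comme l'opérateur $I$ est à coefficients réels et que $\zeta$ est réel, les sommets $z_n$ sont réels~: $z$ est colinéaire, donc singulier d'après la première partie. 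L'admissibilité stricte \eqref{eq:PP-ND1-strict} n'intervient ici que pour garantir $\ell_\nu>0$ (donc $\alpha$ bien défini) et pour nous placer hors du régime dégénéré exclu~; la construction fournit effectivement un élément de $\PP_0(\ell)$.

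L'essentiel du contenu mathématique est déjà porté par le lemme~\ref{lemme:solutions-systeme-tangent} et par l'identification de \og $\alpha$ constant \fg{} avec la colinéarité, de sorte que la proposition est pour l'essentiel une synthèse. Les seuls points réclamant un peu de soin sont le passage propre de \og tout point est régulier \fg{} à \og sous-variété différentielle de dimension $N-2$ \fg{} via le théorème de submersion dans le cas (1), et la vérification que le $z=I.\zeta$ explicite du cas (2) appartient bien à $\PP_0(\ell)$ et y est colinéaire~; aucun de ces deux points ne constitue une difficulté sérieuse.
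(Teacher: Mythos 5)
Votre démonstration est correcte et suit essentiellement la même voie que l'article~: la dichotomie de dimension du lemme~\ref{lemme:solutions-systeme-tangent} ($\dim_\R\TT_z=N-1$ ou $N-2$) combinée à l'identification de la condition \og les $\alpha_\nu$ sont tous égaux\fg{} avec la colinéarité des $z_n$, la contrainte de fermeture $\sum_\nu\zeta_\nu=0$ fournissant la condition de signe $\sum_\nu\epsilon_\nu\ell_\nu=0$. Votre construction explicite $z=I.\zeta$ pour l'assertion (2) ne fait qu'expliciter un point que l'article laisse implicite.
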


Examinons ce qui arrive lorsqu'on  passe au quotient sous l'action
diagonale de $\S^1$, $(e^{i\theta},z)\mapsto e^{i\theta}.z$. 

Fixons $z\in\PP_0(\ell)$. La courbe $\theta\mapsto e^{i\theta}.z$,
contenue dans $\PP_0(\ell)$, a
pour vecteur tangent en $\theta=0$ le vecteur $i.z$. Celui-ci
appartient donc à l'espace tangent $\TT_z$. Dans le paramétrage
\eqref{eq:parametrage-Tz}, ce vecteur est l'image du vecteur
$s=\frac{i}{2}\overline{z}\in \SS_z$.

L'espace tangent à $\MM(\ell)$ en $z$ est  naturellement isomorphe à $\TT_z / \vect[\R]{i.z}$.

\section{Preuve du théorème }\label{sec:preuve-thm}

Nous avons démontré que les points singuliers de
$\PP_0(\ell)$ sont les polygones colinéaires. Pour conclure le
théorème, on se place en un polygone $z\in\PP_0(\ell)$,  non
colinéaire. On démontre alors qu'il est critique pour l'aire sur
$\PP_0(\ell)$ si et seulement si il est cocyclique.

La démonstration est basée sur le calcul de la norme d'opérateur de la
restriction de $d_z A$ à $\TT_z$ après avoir équipé cet espace\footnote{Cette
famille de normes euclidiennes définit une structure Riemannienne de $\PP_0(\ell)$
et nous calculerons en paragraphe \ref{sec:montee-gradient} le gradient de $A$ sur
$\PP_0(\ell)$ relativement à cette structure.} d'une norme euclidienne adéquate. 

Le polygone $z$ est critique pour $A$ sur $\PP_0(\ell)$ si et
seulement si cette norme est nulle.

Définissons sur $\SS_z \subset\C^S$ la fonction 
\begin{equation}
  \label{eq:def-AA}
\AA_z(s) = d_z A . (-J_\alpha.s+\overline{s}) =d_z A . (-J_\alpha+C).s 
\end{equation}

$\AA_z$ est une forme linéaire réelle sur $\SS_z$. Il existe donc un vecteur
$a_z \in\SS_z$ tel que pour tout $s\in\SS_z$,
\begin{equation}
  \label{eq:def-az}
\AA_z(s) = \pdtscalaire{a_z}{s}  
\end{equation}

La norme euclidienne usuelle de ce vecteur est
\begin{eqnarray*}
\|a_z\| &=& \sup_{s\in\SS_z,\|s\|\leq 1} \pdtscalaire{a_z}{s} =\sup_{s\in\SS_z,\|s\|\leq 1} \AA_z(s) \\
 &=& \sup_{w\in\TT_z, \|w\|_z\leq 1} d_z A . w
\end{eqnarray*}
où $\|.\|_z$ est la norme euclidienne sur
$\TT_z$ dont la boule unité est l'ellipsoïde $(-J_\alpha+C)(\B_1\cap
\SS_z)$ où $\B_1$ est la boule unité euclidienne usuelle dans $\C^S$.

Plus précisémment, on a, pour $w\in \TT_z=(-J_\alpha+C)(\SS_z)$ 
\begin{equation}
  \label{eq:def-struct-riem}
  \|w\|_z = \inf\{\|s\|,\, s\in\SS_z,\, w=(-J_\alpha+C).s\}
\end{equation}

Il est clair que $z$ est critique pour $A$ sur $\PP_0(\ell)$ si et seulement si $\|a_z\|=0$.

Explicitons maintenant le vecteur $a_z$ et calculons $\|a_z\|$.

On a 
\begin{eqnarray*}
  \AA_z(s)&=& 
\frac12\sum_k
\Im((z_{k+1}-z_{k-1}).\overline{(-J_\alpha.s+\overline{s})_k})
= -\pdtscalaire{i.D.M.z}{(-J_\alpha+C).s}\\
&=& -\pdtscalaire{(-J_\alpha^*+C)i.D.M.z}{s}
= \pdtscalaire{i(J_\alpha^*+C)D.M.z}{s}
\end{eqnarray*}

Calculons le vecteur $\tilde{a}_z = i(J_\alpha^*+C)D.M.z \in\C^S$.

\begin{itemize}
\item D'une part, $$(i.C.M.D.z)_n =\frac{i}{2}\overline{z_{n+1}-z_{n-1}}$$
\item d'autre part, en utilisant les diverses règles régissant $I,M,D,\opdiag{\alpha},\pi_0$, 
\begin{eqnarray*}
  J_\alpha^*.D.M &=& D.\opdiag{\overline{\alpha}}.I.D.M =
D.\opdiag{\overline{\alpha}}.M.\pi_0
\end{eqnarray*}
On a $\pi_0. z = z$ et, pour $n\in S$,
\begin{eqnarray*}
  (D.\opdiag{\overline{\alpha}}.M.z)_n&=&
  (\opdiag{\overline{\alpha}}.M.z)_{n{+\demi}}-(\opdiag{\overline{\alpha}}.M.z)_{n{-\demi}}= 
\frac12(\overline{\alpha}_{n{+\demi}}(z_{n+1}+z_{n})-\overline{\alpha}_{n{-\demi}}(z_{n}+z_{n-1}))\\
&=&\frac12(\overline{\alpha}_{n{+\demi}}(z_{n+1}-z_{n})-\overline{\alpha}_{n{-\demi}}(z_{n-1}-z_{n}))+z_n(\overline{\alpha}_{n{+\demi}}-\overline{\alpha}_{n{-\demi}})\\
&=&\frac12\overline{z_{n+1}-z_{n-1}}+z_n(\overline{\alpha}_{n{+\demi}}-\overline{\alpha}_{n{-\demi}})
\end{eqnarray*}
Remarquons que pour obtenir cette dernière ligne, nous avons utilisé que
$$\alpha_{n{+\demi}}=\frac{z_{n+1}-z_n}{\overline{z_{n+1}-z_n}} \text{ et }
\alpha_{n{-\demi}}=\frac{z_{n-1}-z_n}{\overline{z_{n-1}-z_n}}$$
\item Finalement,
  \begin{equation}
    \label{eq:formule-atildez}
\forall n\in S,\,(\tilde{a}_z)_n = 
 i\left(  \overline{z_{n+1}-z_{n-1}}+z_n\overline{\alpha_{n{+\demi}}-\alpha_{n{-\demi}}} \right)
  \end{equation}

\end{itemize}

Le vecteur $u=(\overline{\alpha_{n{+\demi}}-\alpha_{n{-\demi}}})_{n\in
  S}$ est non nul car le polygone $(z_n)$ n'est pas colinéaire.

Le vecteur $a_z$ est la projection orthogonale de $\tilde{a}_z$ sur
$\SS_z=\carac^\bot \cap u^\bot $. En appliquant le lemme
\ref{lemme:projection-hermitienne}, en remarquant que $\pdthermitien{\tilde{a}_z}{\carac}=0$,
\begin{equation}
  \label{eq:formule-az}
  a_z
  =\tilde{a}_z-\frac{\pdthermitien{\tilde{a}_z}{(\overline{\alpha_{n{+\demi}}-\alpha_{n{-\demi}}})_n}}{\sum_n|\alpha_{n{+\demi}}-\alpha_{n{-\demi}}|^2}(\overline{\alpha_{n{+\demi}}-\alpha_{n{-\demi}}})_n 
\end{equation}
Par \ancetre{Pythagore},
\begin{eqnarray*}
  \|a_z\|^2&=& \frac{\|\tilde{a}_z\|^2\|(\overline{\alpha_{n{+\demi}}-\alpha_{n{-\demi}}})_n \|^2-|\pdthermitien{\tilde{a}_z}{(\overline{\alpha_{n{+\demi}}-\alpha_{n{-\demi}}})_n}|^2}{\sum_n|\alpha_{n{+\demi}}-\alpha_{n{-\demi}}|^2}
\end{eqnarray*}
Par le cas d'égalité de l'inégalité de \ancetre{Cauchy}--\ancetre{Schwarz}, $\|a_z\|^2=0$ si et seulement
si $\tilde{a}_z$ et $(\overline{\alpha_{n{+\demi}}-\alpha_{n{-\demi}}})_n$ sont
$\C$-colinéaires, \ie, il existe $o\in\C$ tel que, pour tout $n\in
S$,
$$
\overline{z_{n+1}-z_{n-1}}+z_n\overline{\alpha_{n{+\demi}}-\alpha_{n{-\demi}}}
= o.\overline{\alpha_{n{+\demi}}-\alpha_{n{-\demi}}}
$$
et donc, pour tout $n\in S$,
\begin{itemize}
\item soit $\alpha_{n{+\demi}}-\alpha_{n{-\demi}}=0$ auquel cas $z_{n+1}=z_{n-1}$,
\item soit
  $z_n+\overline{\left(\frac{z_{n+1}-z_{n-1}}{\alpha_{n{+\demi}}-\alpha_{n{-\demi}}}\right)}=o$,
  $o$ est donc, par la proposition \ref{prop:centre-CC} le centre du cercle circonscrit aux points
  $z_{n-1},z_n, z_{n+1}$.  
\end{itemize}

Nous prouvons maintenant que dans une telle situation, tous les points
sont sur un même cercle. Posons
$$Z:=\{n\in S,\,\alpha_{n+\demi}=\alpha_{n-\demi}\} = \{n\in
S,\,z_{n+1}=z_{n-1}\}$$

Par hypothèse de non colinéarité de $z$, il existe un point $n_0\not\in Z$. En définissant $\Gamma$, le
cercle de centre $o$ de rayon $R=|z_{n_0}-o|$, on montre alors facilement\footnote{
En posant comme hypothèse de récurrence au rang $p$: $z_{n_0+p}$, $z_{n_0+p+1}\in
\Gamma$. Comme $n_0\not\in Z$, cette hypothèse est vraie au rang
$p=0$. Supposons la vraie au rang $p$ et montrons qu'elle est vraie au
rang $p+1$. Il s'agit de montrer que $z_{n_0+p+2}\in \Gamma$. A
ceci, deux possibilités, soit $p+1\in Z$ et à ce moment
$z_{n_0+p+2}=z_{n_0+p}\in\Gamma$, soit $p+1\not\in Z$ et à ce moment
$z_{n_0+p}$, $z_{n_0+p+1}$ et $z_{n_0+p+2}$ sont sur un même cercle
de centre $o$ qui ne peut être que $\Gamma$.  } par récurrence, que tous les $z_n$, $n\in S$ appartiennent à ce
cercle. 

Le théorème \ref{thm:pt-critiques-aire} est donc démontré.

\section{Multiplicateurs de \ancetre{Lagrange}}\label{sec:lagrange}

\subsection{Leur valeur}\label{ssec:valeur}
Posons, pour $z=(z_n)\in \C^S$, $\lambda=(\lambda_\nu)\in\R^\Sigma$,
$\mu\in\C$,
$$
\LL(z,\lambda,\mu) =2.A(z)+\frac12\sum_\nu
\lambda_\nu(L^2_\nu(z)-\ell_\nu^2)+\Re(\mu\sum_n \overline{z}_n),
$$
le lagrangien associé au problème de la minimisation de $A$ sous la
contrainte $z\in \PP_0(\ell)$.

Il est classique, c'est le théorème des multiplicateurs de
\ancetre{Lagrange}, que, 
à l'exception des point singuliers de $\PP_0(\ell)$, 
les points critiques de $\LL$ sur
$\C^S\times\R^\Sigma\times\C$ sont exactement
les points critiques de $A$ sur $\PP_0(\ell)$.

Calculons donc les multiplicateurs de \ancetre{Lagrange} associés à chaque point
critique de $A$. On a
\begin{eqnarray*}
  d\LL&=& \Re\left( \sum_n\left\{-i (z_{n+1}-z_{n-1})+\lambda_{n-\demi}(z_n-z_{n-1})-\lambda_{n+\demi}(z_{n+1}-z_{n})+\mu\right\}d\overline{z}_n\right)\\
&&
+\frac12\sum_\nu(L^2_\nu(z)-\ell_\nu^2)d\lambda_\nu
+\Re(\sum_n\overline{z}_n d\mu) 
\end{eqnarray*}

$(z,\lambda,\mu)$ est critique pour $\LL$ si et seulement si
\begin{enumerate}
\item $\sum_n\overline{z}_n=0$,
\item $L^2_\nu(z)-\ell_\nu^2=0,\,\forall\nu\in\Sigma$,
\item
  $-i(z_{n+1}-z_{n-1})+\lambda_{n-\demi}(z_n-z_{n-1})-\lambda_{n+\demi}(z_{n+1}-z_{n})+\mu=0,\,\forall  n\in S$.
\end{enumerate}

Les deux premières conditions marquent l'appartenance de $z$ à
$\PP_0(\ell)$, la dernière condition permet de calculer les
multiplicateurs $\lambda,\mu$.

Supposons que $z$ est point critique de $A$ sur
$\PP_0(\ell)$. Il n'est pas colinéaire et est cocyclique. Il existe alors $o\in\C$, $R>0$, $\theta=(\theta_n)\in\R^S$ tels que 
$z_n=o+Re^{i\theta_n}$. On donne la valeur de $\lambda$ en
fonction de la famille d'angles $\theta$.

La dernière condition s'écrit en termes matriciels, en insérant le point $o$,
$$
D.\opdiag{\lambda}.D.z = -2iD.M.(z-o\carac)+\mu.\carac
$$
Ceci montre, d'une part que $\mu=0$ et qu'il existe $c\in\C$ tel que
$$
\opdiag{\lambda}.D.z = -2iM.(z-o\carac)+c\carac
$$
et donc, il existe $c\in\C$ tel que pour tout $\nu\in\Sigma$,
$$
\lambda_\nu(z_{\nu+\demi}-z_{\nu-\demi})=-i(z_{\nu+\demi}-o + z_{\nu-\demi}-o)+c
$$

On doit donc avoir
$$
2i\lambda_\nu \sin \frac{\theta_{\nu+\demi}-\theta_{\nu-\demi}}{2}=-i\cos
\frac{\theta_{\nu+\demi}-\theta_{\nu-\demi}}{2} + c.e^{-i\frac{\theta_{\nu+\demi}+\theta_{\nu-\demi}}{2}}
$$
Comme $\lambda_\nu\in\R$, si $c\not=0$, on doit donc avoir, pour tous
$\nu$,$\nu'\in\Sigma$,
$$\frac{\theta_{\nu+\demi}+\theta_{\nu-\demi}}{2}=\frac{\theta_{\nu'+\demi}+\theta_{\nu'-\demi}}{2}[\pi]
$$
Pour continuer l'analyse, on peut supposer que pour un certain
$\nu_0$, le polygone a été normalisé de sorte que
$\theta_{\nu_0+\demi}+\theta_{\nu_0-\demi}=0[2\pi]$. Cela force alors, pour tout $\nu\in\Sigma$,
$$
\theta_{\nu+\demi}=-\theta_{\nu-\demi}[2\pi]
$$
\begin{itemize}
\item Si $N$ est impair, on a alors $\theta_{\nu+\demi}=0[2\pi]$, pour tout
  $\nu$, ce qui est impossible car alors $z$ serait réduit à un point.
\item Si $N$ est pair, cela force à se retrouver dans une situation
  colinéaire, ce que l'on a exclu pour $z$.
\end{itemize}

En supposant donc que $z$ est point critique de $A$ sur $\PP_0(\ell)$, il vient $c=0$. Remarquons par
ailleurs que 
$$L^2_\nu(z)=|z_{\nu+\demi}-z_{\nu-\demi}|^2 = \ell_\nu^2 = 4R^2\sin^2
\frac{\theta_{\nu+\demi}-\theta_{\nu-\demi}}{2}\not=0$$
et donc
$$
\forall \nu\in S,\,\lambda_\nu = - \cot
\frac{\theta_{\nu+\demi}-\theta_{\nu-\demi}}{2}=-i.\frac{z_{\nu+\demi}-o + z_{\nu-\demi}-o}{z_{\nu+\demi}-z_{\nu-\demi}}\mathpunct{.}
$$

\subsection{Points critiques avec un côté libre}

Fixons une arête $\nu_0$. On peut considérer le problème de trouver
les points critiques de $A$ sous la contrainte que toutes les
longueurs $L_\nu$ soient imposées, à l'exception de $L_{\nu_0}$.

Cela nous conduit à rechercher les points critiques du lagrangien
$$
\LL_{\nu_0}(z,\lambda,\mu) =2.A(z)+\frac12\sum_{\nu\not=\nu_0}
\lambda_\nu(L^2_\nu(z)-\ell_\nu^2)+\Re(\mu\sum_n \overline{z}_n),
$$

Pour abréger, on dira que $z$ est point critique du lagrangien $\LL$
ou $\LL_{\nu_0}$, s'il existe des paramètres complémentaires
$\lambda,\mu$... tels que $(z,\lambda,\mu)$ est point critique de
$\LL$, $\LL_{\nu_0}$...

Les points critiques $z$ de $\LL_{\nu_0}$ sont ceux de $\LL$ avec la
contrainte supplémentaire que $\lambda_{\nu_0}=0$. De tels $z$ sont
cocycliques s'ils sont non alignés. Notons $o$ le centre du cercle
circonscrit à $z$. 
En reprenant la
formule trouvée pour $\lambda_{\nu_0}$, il est clair que, pourvu que $z_{\nu_0+\demi}\not=z_{\nu_0-\demi}$,  
$$
\lambda_{\nu_0}=0 \ssi \frac12 (z_{\nu_0+\demi}+z_{\nu_0-\demi})= o
\mathpunct{,}
$$
ce qui équivaut au fait que le segment $[z_{\nu_0-\demi},z_{\nu_0+\demi}]$ est un diamètre du cercle
    circonscrit à $z$.

Ce résultat, dû à \ancetre{G. Khimshiashvili} et \ancetre{D. Siersma}, \cite{KS2013},  est à rapprocher de \cite{LEGENDRE14}, Proposition IV, p.132.

\subsection{Points critiques à périmètre imposé et variantes}

On peut considérer le problème de trouver
les points critiques de $A$ sous la contrainte que la somme des carrés des
longueurs, \ie $\sum_\nu L^2_\nu$ soit imposée. 
Cela nous conduit à rechercher les points critiques du lagrangien

$$
\LL_{\PP_2}(z,\tilde{\lambda},\mu) =2.A(z)+\frac12\tilde{\lambda}.\sum_{\nu}
(L^2_\nu(z)-\ell_\nu^2)+\Re(\mu\sum_n \overline{z}_n),
$$

Les points critiques de ce lagrangien se déduisent de ceux de $\LL$ avec la
contrainte supplémentaire que $\forall
\nu\in\Sigma,\lambda_{\nu}=\tilde{\lambda}$. Cette condition se réécrit
$$
\forall \nu,\nu'\in\Sigma,\, \theta_{\nu+\demi}-\theta_{\nu-\demi}=\theta_{\nu'+\demi}-\theta_{\nu'-\demi}[2\pi]
$$
où $(z_n)=(o+R.e^{i\theta_n})$. 

Pour $n\in S$, en prenant $\nu=n+\demi$, $\nu'=(n-1)+\demi$, on a donc
$$
\forall n\in S,\, \theta_{n+1}-\theta_{n}=\theta_{n}-\theta_{n-1}[2\pi]
$$
et ceci équivaut à l'existence d'angles $\theta_0$ et
$\alpha=0[\frac{2\pi}{N}]$ tels que, en ayant fixé un sommet $n_0\in
S$, on ait
$$
\forall k \in\ZnZ,\, z_{n_0+k} =o +R.e^{i.\theta_0}.e^{i.k.\alpha}
$$
Cela implique que les $N$ sommets $z_n$ sont les sommets d'un
polygone régulier\footnote{Ce qui n'équivaut pas au fait que $z$ soit
  le $N$-gone régulier au sens usuel du terme. Les polygones croisés
  sont autorisés, par exemple, prendre pour $N=8$,
  $\alpha=\frac{3\pi}{4}$. D'autres cas sont possibles, par exemple le
carré parcouru deux fois dans le cas $N=8$, $\alpha=\frac{\pi}{2}$.}

Le problème, en un sens plus classique--on est alors dans les conditions de la Proposition VII de \cite{LEGENDRE14}--, de trouver
les points critiques de $A$ sous la contrainte que le périmètre, \ie
$P(z)=\sum_\nu \sqrt{L^2_\nu(z)}$ soit imposé, se règle de manière
similaire.

En posant $\psi$, la fonction racine carrée, la méthode nous conduit à rechercher les points critiques du lagrangien
$$
\LL_{\PP_1}(z,\tilde{\lambda},\mu) =2.A(z)+\frac12\tilde{\lambda}.\sum_{\nu}
(\psi(L^2_\nu(z))-\psi(\ell_\nu^2))+\Re(\mu\sum_n \overline{z}_n),
$$

Ceux-ci sont les points critiques du lagrangien $\LL^\psi$ défini par
$$
\LL^\psi(z,\lambda,\mu) =2.A(z)+\frac12\sum_\nu
\lambda_\nu(\psi(L^2_\nu(z))-\psi(\ell_\nu^2))+\Re(\mu\sum_n \overline{z}_n),
$$
avec la contrainte supplémentaire que $\forall
\nu\in\Sigma,\lambda_{\nu}=\tilde{\lambda}$. 

Un polygone $z$, non colinéaire, de côtés de longueur $>0$, est
critique pour ce lagrangien si et seulement si il est cocyclique et un
calcul similaire à celui en \ref{ssec:valeur} (on conserve les notations
de ce calcul) montre qu'alors les
multiplicateurs de \ancetre{Lagrange} $(\lambda_\nu)$ vérifient 
$$
\forall \nu\in\Sigma,\, \lambda_\nu.\psi'(L^2_\nu(z))= - \cot
\frac{\theta_{\nu+\demi}-\theta_{\nu-\demi}}{2}
$$
Comme $\psi'(x)=\frac1{2\sqrt{x}}$ et
$\sqrt{L^2_\nu(z)}=2R\left|\sin\frac{\theta_{\nu+\demi}-\theta_{\nu-\demi}}{2}\right|$, on obtient
que
$$
\forall \nu\in\Sigma,\, \lambda_\nu=
-4R\signe(\sin\frac{\theta_{\nu+\demi}-\theta_{\nu-\demi}}{2}) \cos \frac{\theta_{\nu+\demi}-\theta_{\nu-\demi}}{2}
$$

Pour deux angles $\theta,\theta'$, on a
$$
\signe(\sin\theta)\cos\theta = \signe(\sin\theta')\cos\theta' \ssi \theta=\theta'[\pi] 
$$

Un  polygone $z=(z_n) =(o+R.e^{i\theta_n})$ est donc critique pour
le lagrangien à périmètre constant si et seulement si
$$
\forall \nu,\nu'\in\Sigma,\, 
\theta_{\nu+\demi}-\theta_{\nu-\demi} = \theta_{\nu'+\demi}-\theta_{\nu'-\demi} [2\pi]
$$
On obtient donc les mêmes points critiques que pour $\LL_{\PP_2}$.

\section{Montée de gradient}\label{sec:montee-gradient}

\subsection{Gradient de l'aire} 
Pour simplifier la discussion, on se place en $\ell$ tel que
$\PP_0(\ell)$ est une variété différentielle et donc ne contient pas
de polygones colinéaires. La discussion peut aussi se faire dans le
cas général, en excluant les polygones colinéaires, singuliers.

La démonstration du théorème principal fait apparaitre un gradient
pour la fonctionnelle d'aire sur l'espace $\PP_0(\ell)$. 

En effet, en reprenant l'équation \eqref{eq:def-struct-riem}, on
définit en chaque $z\in\PP_0(\ell)$ une norme euclidienne $\|.\|_z$
sur $\TT_z$, et donc un produit scalaire $\pdtscalaire{.}{.}_z$ sur $\TT_z$,
ce qui définit une structure
Riemanienne sur $\PP_0(\ell)$. 

Détaillons un peu ce point : Notons, pour alléger,
$J=-J_\alpha+C$. $J:\C^S \to \C^S$. On a $\TT_z = J(\SS_z)$.

Notons $\tilde{J}:\kernel{J}^{\bot}\cap \SS_z \to \TT_z$, la restriction
de $J$. $\tilde{J}$
est  un isomorphisme de $\R$-espaces vectoriels.

Si $w\in\TT_z$, $s\in\SS_z$ tel que $J.s=w$ et $\tilde{s}\in
\kernel{J}^{\bot}\cap \SS_z$, $\tilde{s}=\tilde{J}^{-1}.w$ alors 
$s-\tilde{s} \in \kernel{J}\cap\SS_z$ et, par orthogonalité, 
$$
\|s\|=\sqrt{\|\tilde{s}\|^2+\|s-\tilde{s}\|^2} \geq \|\tilde{s}\|
$$
En résumé,
$$
\|w\|_z=\inf\{\|s\|,\, s\in\SS_z,\, w=(-J_\alpha+C).s\}=\|\tilde{s}\|=\|\tilde{J}^{-1}w\|
$$

Le produit scalaire $\pdtscalaire{.}{.}_z$ est donc défini sur $\TT_z$
par la formule
$$
\forall w,w'\in\TT_z,\, \pdtscalaire{w}{w'}_z=\pdtscalaire{\tilde{J}^{-1}w}{\tilde{J}^{-1}w'}=\pdtscalaire{w}{(\tilde{J}.\tilde{J}^*)^{-1}w'}
$$
où $\tilde{J}^*$ est l'adjoint de $\tilde{J}$ lorsque espaces de
départ et d'arrivée sont munis du produit scalaire usuel.

Un instant de réflexion montre que, 
$$
\forall w\in \TT_z,\, \tilde{J}.\tilde{J}^*.w=J.J^*.w
$$

Si $w=J.s$ avec $s\in\SS_z$ et $w'=J.s'$ avec $s'\in\SS_z\cap(\kernel{J})^{\bot}$, alors, du fait que $\pdtscalaire{\tilde{J}^{-1}.J.s-s}{s'}=0$,
$$
\pdtscalaire{w}{w'}_z=\pdtscalaire{J.s}{J.s'}_z
\pdtscalaire{\tilde{J}^{-1}.J.s}{\tilde{J}^{-1}.J.s'}
=\pdtscalaire{\tilde{J}^{-1}.J.s}{s'}
=\pdtscalaire{s}{s'}
$$
De cette dernière formule, il vient que, en
posant
\begin{equation}
  \label{eq:def-grad}
  \nabla_z A = (-J_\alpha+C).a_z
\mathpunct{,}
\end{equation}
où $a_z$ est défini en \eqref{eq:def-az}, alors
$\nabla_z A \in \TT_z$ et on a, en reprenant \eqref{eq:def-AA} et \eqref{eq:def-az},
$$
\forall w\in\TT_z,\,\pdtscalaire{\nabla_z A}{w}_z = d_zA .w
$$

Ceci montre que $\nabla_z A$ est le gradient de l'aire restreinte à
la variété riemanienne $\PP_0(\ell)$.

On présente en \autoref{fig:gradient-exemple} le gradient calculé numériquement par la méthode du paragraphe \ref{sec:numerics}.

\begin{figure}[h!]
  \centering
  \includegraphics[scale=0.8]{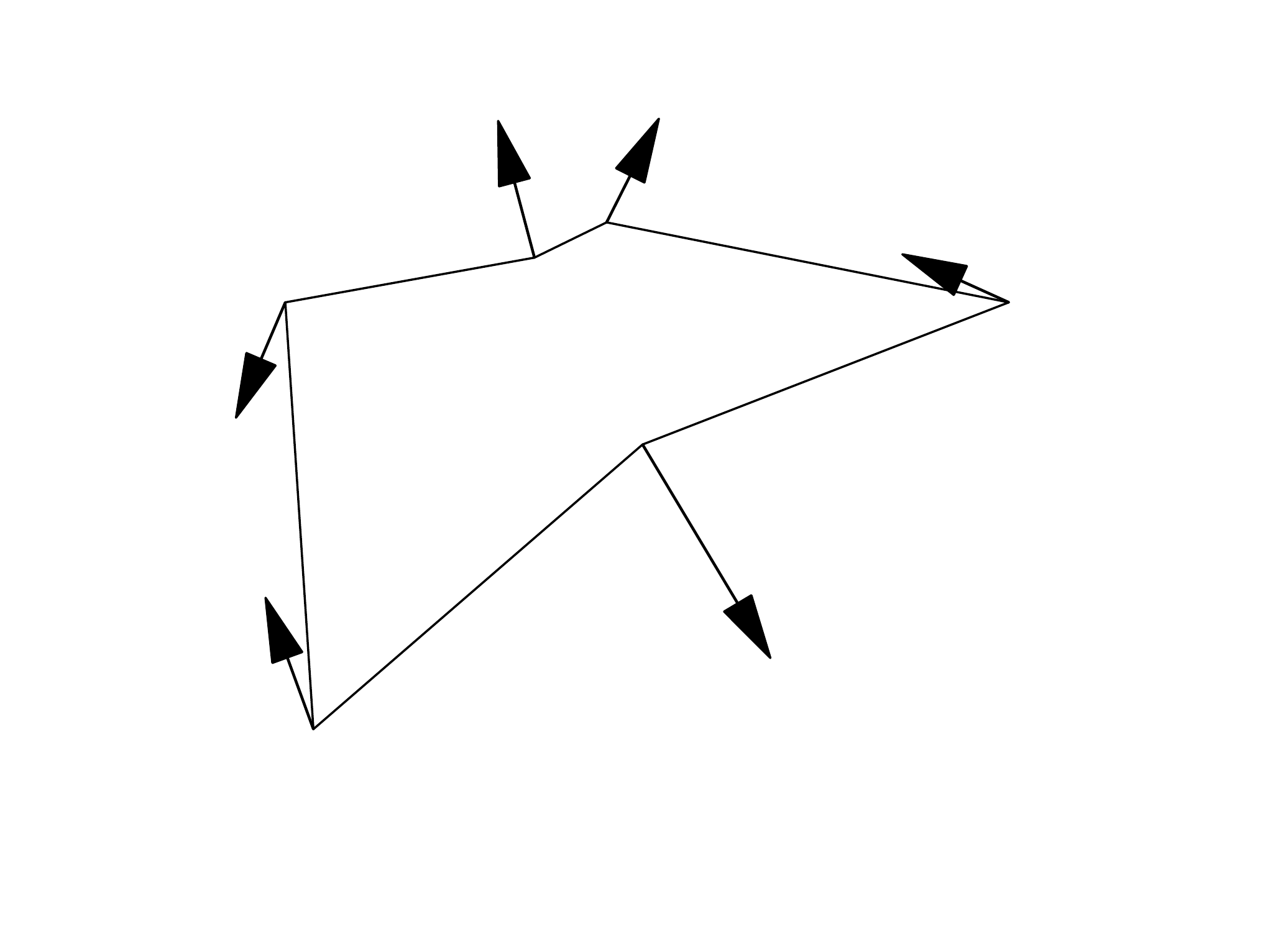}
  \caption{Un polygone à 6 côtés et le gradient de l'aire à côtés imposés}
  \label{fig:gradient-exemple}
\end{figure}

\subsection{Formules explicites pour $I$ et $K$}

Posons\footnote{La fonction $B$ ainsi définie est une variante du
  premier polynome de \ancetre{Bernoulli}. Les formules sont en analogie avec
  les faits suivants bien connus. Soit $E=\CC^\infty(\R) \cap \{f: f
  \text{ est $1$-périodique et  $\int_0^1 f(x)~dx=0$}\}$, $D:E\to E$,
  l'opérateur de dérivation est un isomophisme de réciproque $I:E\to
  E$ défini par
$$
\forall f\in E,\,\forall x\in\R,\, I(f)(x)=\int_0^1 B(y)f(x-y)~dy 
$$ 
où $B$ est la fonction $1$ périodique sur $\R$ définie par $B(0)=0$ et
$\forall t\in ]0,1[,\,B(t)=\frac12-t$.

} pour $t\in ]0,N[$ $B(t)=\frac{N-2t}{2N}$, $B(0)=0$ et étendons
$B$ à $\R$ par $N$ périodicité. $B$ est une fonction impaire. De la sorte, si $k\in \ZnZ$ ou $\kappa\in
\frac12+\ZnZ$, $B(k)$ et $B(\kappa)$ sont bien définis. 
Remarquons que si $t\in]\frac12,N-\frac12[$,
$$
B(t-\frac12)-B(t+\frac12)=\frac1N, B(t-\frac12)+B(t+\frac12)=B(t), 
$$
alors que pour $t=0$,
$$
B(t-\frac12)-B(t+\frac12)=\frac1N-1, B(t-\frac12)+B(t+\frac12)=B(t)=0, 
$$
De ceci, on déduit
\begin{lemma}\label{lemme:formules-I-K}
  Pour $z\in\C^S$, $\zeta\in\C^\Sigma$, $n\in S$, $\nu\in \Sigma$, on a
  \begin{eqnarray}
\label{eq:formules-I}(I.z)_\nu &=& \sum_{\kappa\in
  \frac12+\ZnZ}B(\kappa)z_{\nu-\kappa}\mathpunct{,} (I.\zeta)_n =\sum_{\kappa\in \frac12+\ZnZ}B(\kappa)\zeta_{n-\kappa}\mathpunct{.}\\
\label{eq:formules-K}(K.z)_n &=& \sum_{k\in \ZnZ}B(k)z_{n-k}\mathpunct{,} (K.\zeta)_\nu = \sum_{k\in \ZnZ}B(k)\zeta_{\nu-k}\mathpunct{.}
  \end{eqnarray}
\end{lemma}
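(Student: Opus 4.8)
The plan is to establish the two formulas \eqref{eq:formules-I} for $I$ and \eqref{eq:formules-K} for $K$ by directly exhibiting the proposed right-hand sides as the correct inverses, rather than by computing $I$ abstractly. Recall that $I$ is defined as the inverse of the isomorphism $D\colon\C_0^\Sigma\to\C_0^S$ (or $\C_0^S\to\C_0^\Sigma$), extended by $I.\carac=0$. So to prove the formula for $I$, I would define an operator $\tilde I$ by the right-hand side of \eqref{eq:formules-I} and check two things: that $D.\tilde I=\pi_0$ (equivalently $\tilde I.D=\pi_0$, by Lemme~\ref{lemme:prop-I-K}.\ref{lemme:prop-I-K:inversion-D}), and that $\tilde I.\carac=0$. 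The second point is immediate from the observation that $B$ is odd and $N$-periodic, so $\sum_{\kappa\in\frac12+\ZnZ}B(\kappa)=0$ by pairing $\kappa$ with $-\kappa$ (equivalently $N-\kappa$).

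First I would verify $\tilde I.\carac = 0$: applying the formula to $z=\carac$ gives $(\tilde I.\carac)_\nu=\sum_\kappa B(\kappa)$, and since $B(\kappa)=-B(-\kappa)=-B(N-\kappa)$ the half-integers pair up to cancel, giving $0$. Next, the core step is the telescoping computation $D.\tilde I=\pi_0$. For $z\in\C^S$ and $\nu\in\Sigma$ I would write
$$
(D.\tilde I.z)_n = (\tilde I.z)_{n+\demi}-(\tilde I.z)_{n-\demi}
=\sum_{\kappa}\bigl(B(\kappa)z_{n+\demi-\kappa}-B(\kappa)z_{n-\demi-\kappa}\bigr).
$$
Reindexing so that the coefficient of a fixed $z_{n-k}$ is collected, the coefficient becomes $B(k-\demi)-B(k+\demi)$, which the pre-Lemme identities evaluate to $\tfrac1N$ for $k\neq0$ and to $\tfrac1N-1$ for $k=0$. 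Hence the coefficient of each $z_{n-k}$ is $\tfrac1N$ except for $z_n$, whose coefficient is $\tfrac1N-1$; summing gives exactly $(\pi_0.z)_n=z_n-\tfrac1N\sum_m z_m$. This is precisely the content of $D.\tilde I=\pi_0$, so $\tilde I$ agrees with $I$ on $\C_0^S$ and both kill $\carac$, whence $\tilde I=I$.

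For \eqref{eq:formules-K}, since $K=MI$, I would either compose the midpoint operator $M$ with the formula just established for $I$, or argue the analogous way directly. Composing $M$ with $I$ and using $B(\kappa-\demi)+B(\kappa+\demi)=B(\kappa)$ (the second of the two displayed identities, with the $k=0$ exceptional case $B(-\demi)+B(\demi)=0$) turns the half-integer sum over $\kappa$ into the integer sum over $k$ with coefficients $B(k)$, giving the stated formula. The main obstacle—really the only delicate point—is bookkeeping the two exceptional relations at the origin ($B(0)=0$ versus the $k=0$ defect $\tfrac1N-1$) so that the telescoping lands exactly on $\pi_0$ and not on the identity; once those boundary terms are tracked correctly, everything is routine reindexing over the cyclic group $\ZnZ$.
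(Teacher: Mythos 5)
Your argument is precisely the deduction the paper intends: the paper states the two identities for $B(t-\frac12)\pm B(t+\frac12)$ and leaves the lemma as an immediate consequence (\og De ceci, on déduit\fg), with no written proof in the appendix, and your plan --- check that the proposed right-hand side kills $\carac$ and lands in $\carac^\bot$ (both from $\sum_\kappa B(\kappa)=0$ by oddness), then show $D.\tilde{I}=\pi_0$ by telescoping and get $K$ by composing with $M$ --- is the correct way to fill that in. One sign slip in the execution: after reindexing, the coefficient of $z_{n-k}$ in $(\tilde{I}.z)_{n+\demi}-(\tilde{I}.z)_{n-\demi}$ is $B(k+\demi)-B(k-\demi)$, i.e.\ $-\frac1N$ for $k\neq0$ and $1-\frac1N$ for $k=0$; with the coefficients you state ($\frac1N$ and $\frac1N-1$) the sum would come out to $-(\pi_0.z)_n$, so your (correct) final identification with $\pi_0$ only works once the two terms of the difference are put back in the right order. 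Note also that the paper's second displayed identity is itself off by a factor of two --- one actually has $B(t-\frac12)+B(t+\frac12)=2B(t)$ --- and it is exactly the factor $\frac12$ in the definition of $M$ that then yields the coefficient $B(k)$ in the formula for $K=MI$; it is worth making that cancellation explicit rather than citing the identity as printed.
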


\subsection{Numerics}\label{sec:numerics} Le numéricien qui sommeille en beaucoup d'entre
nous ne peut que tenter d'implémenter un algorithme de montée de
gradient.  Il s'agit de considèrer l'équation différentielle
\begin{equation}
  \label{eq:EDO-grad}
\frac{dz}{dt} = \nabla_z A  
\end{equation}

Comme $\PP_0(\ell)$ est une variété riemanienne compacte, pour toute
donnée initiale $z_0$, le problème de \ancetre{Cauchy} associé à
\eqref{eq:EDO-grad} et à la donnée initiale $z(0)=z_0$  admet une
unique solution $z$ sur $\R$.
Les limites de $z$ en $\pm\infty$ existent et sont des points critiques
de l'aire $A$ restreinte à $\PP_0(\ell)$.

Le long d'une trajectoire $z$, l'aire est croissante.

On peut implémenter rapidement un schéma d'\ancetre{Euler} 
visant à
maximiser l'aire. Une difficulté non négligeable est que la
discrétisation naïve ne conserve pas les longueurs imposées. 

Par convexité de chacun des ensembles $\{z\in\C_0^S, L^2_\nu(z)\leq
\ell^2_\nu\}$, à chaque étape, toutes les
longueurs $L_\nu$ augmentent. Le
travail numérique nécessite la prise en considération
de ce phénomène ; la solution adoptée pour la petite
simulation présentée en \autoref{fig:montee-gradient-6} et
\autoref{fig:montee-gradient-7} est de reprojeter à chaque étape sur
$\PP_0(\ell)$ via une méthode de \ancetre{Newton}.

Le schéma numérique se traduit donc ainsi: Etant donnés un pas de
temps $dt$, une tolérance $\epsilon$, partant d'un polygone
$Z_0\in\C_0^S$, $(\ell^2_\nu)_\nu=(L^2_\nu(Z_0))_\nu$, on construit par récurrence la
suite $(Z_k)_{k\in\N}$ par, à chaque pas $k\in\N$,
\begin{itemize}
\item une étape \ancetre{Euler}
$$
Z'_{k+1}=Z_k+dt.\nabla_{Z_k} A
$$
D'un point de vue très pratique, pour calculer
$\nabla_z A$, étant donné un polygone $z$, on calcule d'abord le
vecteur $\alpha$ défini par la formule \eqref{eq:def-alpha}, puis $\tilde{a_z}$ et $a_z$ en utilisant
les formules     \eqref{eq:formule-atildez} et
\eqref{eq:formule-az} et enfin $\nabla_z A=(-J_\alpha+C).a_z$ en
utilisant la définition \eqref{eq:def-Jalpha} de $J_\alpha$ ainsi que
la formule \eqref{eq:formules-I} pour $I$.
\item suivie d'une étape \ancetre{Newton}, \ie une variation
multidimensionnelle de la méthode de Babylone pour l'extraction de
racines carrées.
$$W_0=Z'_{k+1},\,\forall p\in \N,\, W_{p+1}=\frac12.\left(W_p
+I.\opdiag{\left(\frac{\ell^2_\nu}{L^2_\nu(W_p)}\right)_\nu}.D.W_p\right)$$
puis
$$
Z_{k+1}=W_p
$$
de sorte que $(\ell^2_\nu)_\nu$ et $(L^2_\nu(Z_{k+1}))_\nu$ soient $\epsilon$-proches.
\end{itemize}

\begin{figure}[h!]
  \centering
  \includegraphics[scale=0.65]{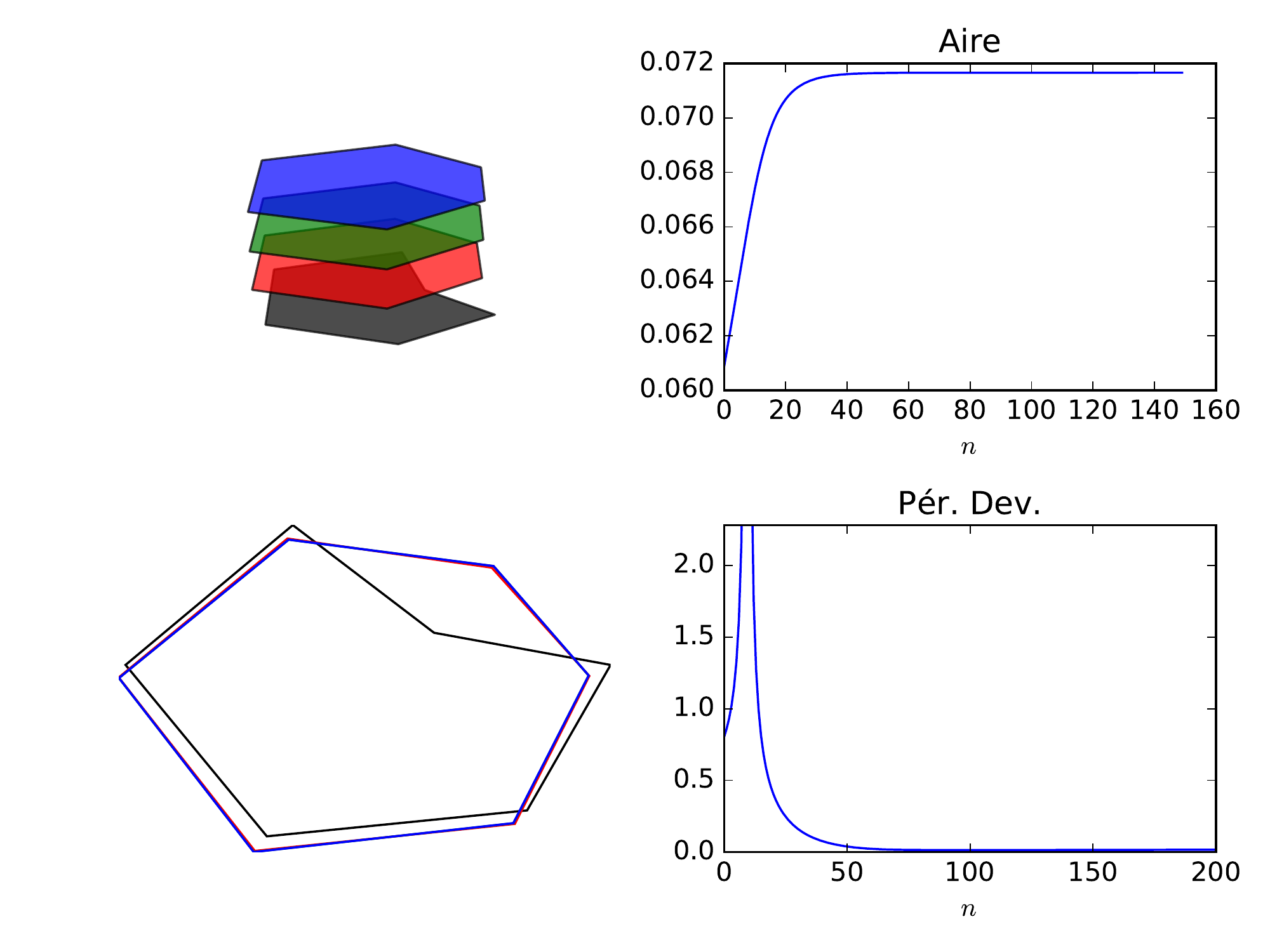}
  \caption{Evolution d'un polygone à 6 côtés, de son aire, du périmètre
    du polygone développé--c'est le polygone des centres des cercles
    circonscrits à 3 points consécutifs. On donne les vues dans le
    plan et, en 3D, dans des plans distincts, des
    polygones en début, $\frac13$, $\frac23$ et fin de l'évolution}
  \label{fig:montee-gradient-6}
\end{figure}

\begin{figure}[h!]
  \centering
  \includegraphics[scale=0.65]{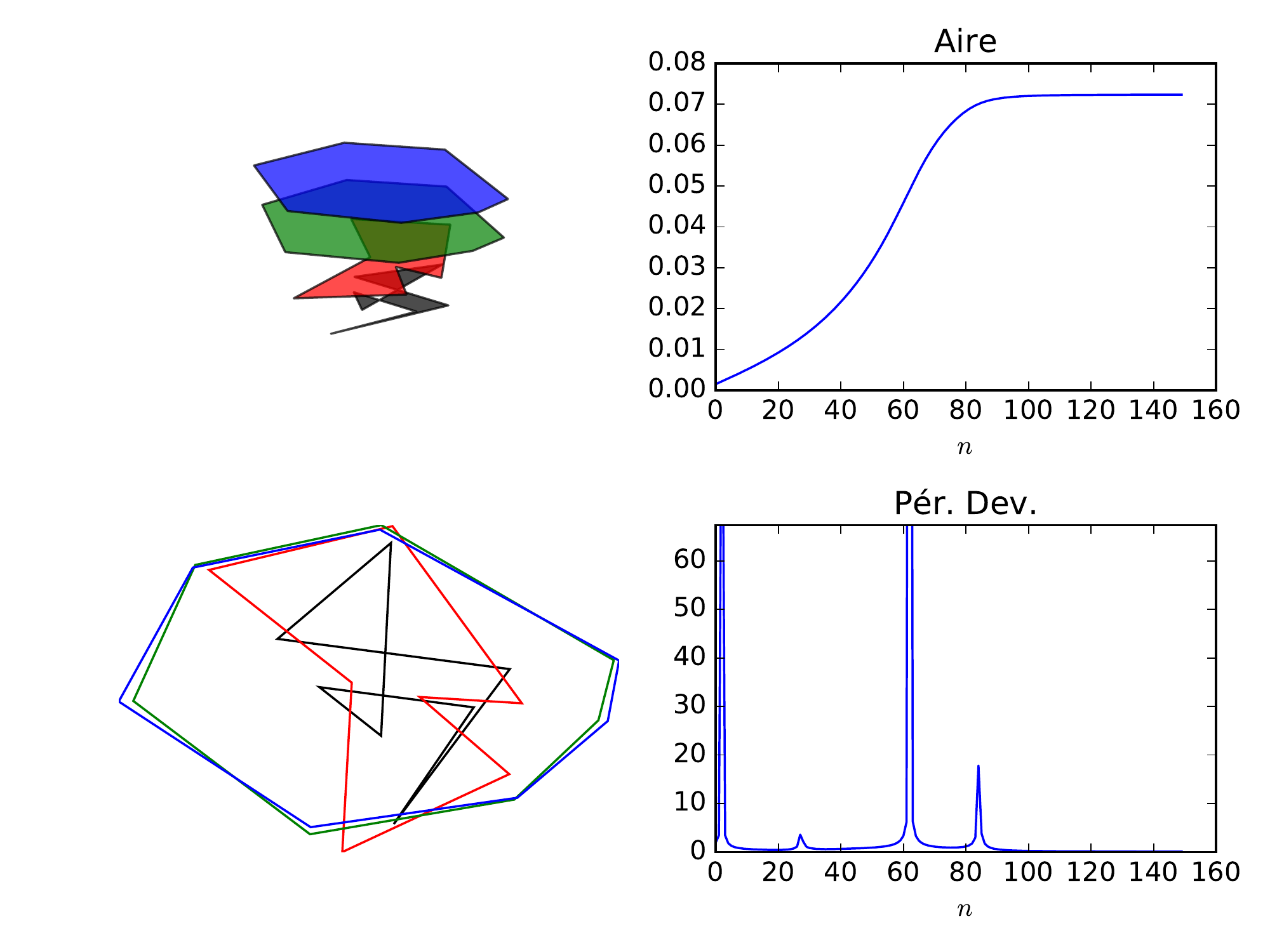}
  \caption{Evolution d'un polygone à 7 côtés, de son aire, du périmètre
    du polygone développé}
  \label{fig:montee-gradient-7}
\end{figure}

\section{Indices et dénombrement}\label{sec:applications}

\subsection{Indices}

L'expérience résumée en \autoref{fig:montee-gradient-7} soulève au
moins une question immédiate : l'aire augmente, comme prévu, mais le graphe du périmètre du polygone
développé, \ie le polygone $(o_n)_{n\in S}$ où $o_n$ est le centre du
cercle circonscrit aux points $z_{n-1},z_n$ et $z_{n+1}$ présente un
certain nombre de pics.

La longueur infinie de ce polygone marque le fait que l'un des $o_n$
est rejeté à l'infini, \ie la présence d'un triplet $z_{n-1},z_n$ et $z_{n+1}$ de
sommets consécutifs alignés, alors qu'une longueur nulle marque un
polygone $(z_n)$ cocyclique. 

Un pic marque  donc un phénomène de \og dépliement \fg au niveau d'un
sommet, le passage d'un angle au sommet de part et d'autre de l'angle
plat alors qu'un minimum marque l'approche d'un polygone cocyclique.

On en déduit que lors de l'évolution présentée en \autoref{fig:montee-gradient-7}, le polygone s'est approché, dans une direction d'aire croissante, d'un polygone cocyclique, critique, pour s'en éloigner ensuite.

Il s'agit d'une manifestation du comportement des trajectoires de
l'équation différentielle \eqref{eq:EDO-grad}  au voisinage d'un point
critique de $A$. Ce comportement est, au premier abord, déterminé par les
valeurs propres de la différentielle de $\nabla_z A$, autrement dit,
la signature de la Hessienne de $A$ restreinte à $\PP_0(\ell)$.

Ceci, entre autres considérations, pose la question du caractère non dégénéré\footnote{On remarque que, du fait de l'action diagonale de
$\S^1$ sur $\PP_0(\ell)$ et de l'invariance de $A$ sous cette action,
les points critiques de $A$ sont dégénérés au sens usuel. Cette
question est donc à considérer modulo cette
invariance, ce qui revient à travailler avec l'espace $\MM(\ell)$.}
 des points critiques de $A$ ainsi que celle de la détermination de
leur indice, \ie le nombre de valeurs
  propres négatives de la Hessienne. 

Ce travail a été mené dans le cas générique par \ancetre{Panina} et
\ancetre{Zhukova}  dans \cite{PZ2011} et \cite{Z2013}. Pour
schématiser le résultat principal de \cite{Z2013}, Théorème 3.1, on peut dire que, pour un $\Sigma$-uple
$\ell$ générique
\begin{enumerate}
\item Un point critique $z$ de $A$ est non dégénéré.
\item L'indice de \ancetre{Morse} $m(z)$, de $A$ en $z$, se calcule
  effectivement à l'aide de la géométrie de $z$. 
\end{enumerate}

\subsection{Dénombrement}

La théorie de \ancetre{Morse} permet d'estimer inférieurement le
nombre de points critiques d'une fonction\footnote{dont les points critiques
sont non dégénérés} sur une variété connaissant
la topologie de la variété. Les informations nécessaires--les nombres
de \ancetre{Betti} de $\MM(\ell)$-- sont calculés par \ancetre{Farber}
et \ancetre{Schütz} dans \cite{FS07}.

Le nombre de points critiques (et donc le nombre de configurations
cocycliques) dans $\MM(\ell)$ est minoré par la somme des nombres de
\ancetre{Betti} de $\MM(\ell)$, qui, d'après, \cite{FS07}, Theorem 2, est
inférieure à $2^{N-1} -\parmi{p}{N-1}$ où, suivant la parité, $N=2p+1$
ou $N=2p+2$. Cette somme dépend uniquement de propriétés combinatoires
du vecteur $\ell=(\ell_\nu)\in\R^{\Sigma}$ et peut se calculer dans
des cas particuliers. 

Un autre angle d'attaque pour tenter de dénombrer le nombre de
configurations cocycliques est de remarquer qu'il existe des
généralisations des formules de \ancetre{Heron} et de
\ancetre{Brahmagupta}, donnant respectivement l'aire d'un triangle et
d'un quadrilatère en fonction des longueurs des côtés.

Plus précisémment, \cf \cite{Pak2005}, \cite{MRR2005}, pour tout
$N\geq 3$, il existe un polynôme \footnote{Ce polynôme
  s'appelle le polynôme de \ancetre{Heron}--\ancetre{Robbins}. L'existence d'une telle
  relation polynomiale ne devrait pas être une surprise attendu que
  $A$ est valeur critique d'une fonctionnelle polynomiale sur une
  variété définie par des polynomes en les longueurs des côtés, il
  serait intéressant de voir si la théorie de l'élimination
  effectivement et informatiquement appliquée à l'obtention de la valeur de $A$ en  les points critiques du lagrangien $\LL$ donne des formules pour
  $\alpha_N$, par exemple pour $N=10$. Dans ce cas $\Delta_N=374$.}
homogène en $N+1$ variables, $\alpha_N$, à coefficients
entiers, unitaire en sa première variable, symétrique en les $N$ dernières variables, tel que, pour tout $N$-gone cocyclique, d'aire $A$, de
longueur de côtés $\ell=(\ell_\nu)$,
$$
\alpha_N(16A^2,(\ell_{\nu}^2)_{\nu\in\Sigma})=0
$$ 
D'après 
\cite{MRR2005}, $\alpha_N$ est, en $16A^2$, de degré
$
\Delta_N = \frac{N}{2}\parmi{p}{N-1}-2^{N-2}$ où, suivant la parité,
  $N=2p+1$ ou $N=2p+2$.

Dans les exemples $N=3$ et $N=4$, 
\begin{itemize}
\item 
La formule de \ancetre{Heron} donne l'aire d'un triangle $abc$ en fonction
des longueurs des côtés $\ell_\alpha,\ell_\beta,\ell_\gamma$
$$
A=\sqrt{p(p-\ell_\alpha)(p-\ell_\beta)(p-\ell_\gamma)} \text{ où } p=\frac12(\ell_\alpha+\ell_\beta+\ell_\gamma)
$$
On peut réécrire cette formule pour tenir aussi compte du cas du triangle
orienté négativement sous la forme d'une relation polynomiale
liant $A^2$, $\ell_\alpha^2$, $\ell_\beta^2$ et $\ell_\gamma^2$ :
$$
16A^2 
-2(\ell_\alpha^2\ell_\beta^2+\ell_\beta^2\ell_\gamma^2+\ell_\gamma^2\ell_\alpha^2)+\ell_\alpha^4+\ell_\beta^4+\ell_\gamma^4=0
$$
\item Pour un quadrilatère convexe cocyclique de longueurs des côtés $\ell_\alpha$, $\ell_\beta$, $\ell_\gamma$ et $\ell_\delta$, on dispose de la formule de \ancetre{Brahmagupta},

$$
A=\sqrt{(p-\ell_\alpha)(p-\ell_\beta)(p-\ell_\gamma)(p-\ell_\delta)} \text{ où }
p=\frac12(\ell_\alpha+\ell_\beta+\ell_\gamma+\ell_\delta)
\mathpunct{,}
$$
ce qui se réécrit
$$
\left(16A^2-(\ell_\alpha^2+\ell_\beta^2+\ell_\gamma^2+\ell_\delta^2)^2+2(\ell_\alpha^4+\ell_\beta^4+\ell_\gamma^4+\ell_\delta^4)\right)^2-64\ell_\alpha^2\ell_\beta^2\ell_\gamma^2\ell_\delta^2=0
$$
Cette formule couvre tous les cas possibles de quadrilatères dans
$\MM(\ell)$ en tenant compte de la symétrie et de la possibilité de
croiser le quadrilatère. 
\end{itemize}

Il existe par ailleurs, d'après \cite{MRR2005}, des exemples où un vecteur de longueurs de côté
$\ell=(\ell_\nu)$ donne lieu à $\Delta_N$ carrés d'aires distinctes de
$N$-gones cocycliques.
De notre point de vue, cela signifie que sur $\MM(\ell)$, $A$ prend 
$2\Delta_N$ valeurs distinctes et donc qu'il y a au moins $2\Delta_N$
configurations cocycliques distinctes dans $\MM(\ell)$.

\section{Complément~: Sous-variétés algébriques, calcul différentiel réel dans $\C^S$}\label{sec:calc-diff-rappels}

Une sous-variété algébrique réelle $\PP$ de $\C^S\simeq \R^{2N}$ est une partie de $\C^S\simeq\R^{2N}$ définie par un système de $p$ équations cartésiennes polynomiales réelles en $2N$ indéterminées. 

L'espace tangent, noté $\TT_z$, à $\PP$ en un point $z$ de $\PP$ est l'intersection des noyaux des différentielles en $z$ des $p$ polynômes définissants. Il s'agit d'un $\R$-sous-espace vectoriel de $\C^S$.

Pour être de plus une sous-variété \emph{différentielle} (de dimension $2N-p$) au voisinage d'un de ses points $z$, il suffit de demander que, par ailleurs, les différentielles en $z$ des $p$ polynomes définissants forment une famille libre, \ie que $\dim_\R \TT_z =  2N-p$.

Le théorème des fonctions implicites implique alors qu'au voisinage de $z$, quitte à renuméroter les axes, dans un parallélipède ouvert $P_z$, centré en $z$, $\PP$ est le graphe d'une fonction $f_z:\R^{2N-p}\to\R^p$ de classe $\CC^\infty$, \ie
$$
\PP\cap P_z = \{ (u,f_z(u)), u\in \R^{2N-p} \} \cap P_z
$$
et donne la paramétrisation de $\TT_z$ en graphe d'application $\R$-linéaire
$$
\TT_z =\{ (h,(d_z f_z).h),\, h\in \R^{2N-p} \}
$$

Un point $z$ de $\PP$ est dit \emph{régulier} s'il vérifie $\dim_{\R}\TT_z=2N-p$ , \emph{singulier} sinon.

Illustrons ces faits par un exemple très élémentaire.

Considérons dans $\C=\C^1\simeq \R^2$, et donc $N=1$, la partie
$$
\PP=\{z\in\C,\, \frac12\Im(z^2)=0\} \simeq \{(x,y)\in \R^2 ,\quad xy=0\}.
$$
Il s'agit de la réunion des deux axes de coordonnées de $\R^2$. C'est une sous-variété algébrique réelle de $\C$ contenant le point $z_0=0$  définie par un ($p=1$) seul polynôme définissant $P(z)=\frac12\Im(z^2)=xy$

On a 
$$
d_{z_0} P= y_0.dx+x_0.dy = 0 
$$
et donc
$$
\TT_{0}=\R^2,\, \dim_\R \TT_{0}=2 > 2N-p=1
$$
Le point $z_0=(x_0,y_0)$ est donc un point singulier de $\PP$ et le dessin montre qu'au moins en un sens naïf, la figure tangente à $\PP$ en ce point n'est certainement pas une droite.  

Pour tout $z\in\PP$, $z\not=z_0$, $d_z P$ est une forme linéaire non nulle sur $\R^2$ et donc $\dim_\R \TT_z =1 =2N-p$. Un tel $z$ est donc un point régulier de $\PP$.

Le calcul des différentielles peut-être mené directement en nombres complexes, sans avoir à repasser par les coordonnées réelles. Rappelons comment ce mécanisme fonctionne~:
Avec le choix $S=\{0,\dots,N-1\}$ de réalisation de $S$, l'isomorphisme naturel entre $\C^S$ et $\R^{2N}$ associe à tout $N$-uplet  $z=(z_n)_{n\in S}$ le $2N$-uplet de nombres réels $((x_n,y_n))_{n\in\{0,\dots,N-1\}}=(x_0,y_0,x_1,y_1,\dots)$ où, pour tout $n$, $z_n=x_n+iy_n$.

Si $f:\C^S\simeq\R^{2N}\to\C$ est une fonction différentiable, et $z\in\C^S$, la différentielle de $f$ en $z$ est l'application $\R$-linéaire, $d_z f:\C^S \to \C$ vérifiant
$$
d_z f = \sum_{n}\frac{\partial f}{\partial x_n}(z)d_z x_n + \sum_{n}\frac{\partial f}{\partial y_n}(z)d_z y_n
$$
où $d_z x_n : \C^S \to \C$, resp. $d_z y_n : \C^S \to \C$, sont définies par
$$
\forall t\in\C^S,\, (d_z x_n).t =\Re(t_n),\,
\text{ resp.}
(d_z y_n).t =\Im(t_n),\,
$$ 
Ces applications $\R$-linéaires ne dépendent\footnote{nous oublions donc l'indiçage par $z$} pas de $z$ et forment une base du $\R$-espace vectoriel $\LL(\C^S,\C)$.

En appliquant ce qui vient d'être dit aux fonctions \og coordonnées complexes\fg\  $z_n:\C^S\to\C$, resp. $\overline{z}_n$, associant à tout vecteur $z\in\C^S$, $z_n$ sa composante d'indice $n$, resp. le conjugué de celle-ci, on obtient les relations, en $z\in\C^S$ fixé,
$$
\forall n\in \C^S,\, 
d_z z_n = dx_n +i.dy_n
\et
d_z \overline{z}_n = dx_n -i.dy_n
$$
Les applications $\R$-linéaires $d_z z_n$ et $d_z \overline{z}_n$ ne dépendent pas de $z$ et forment une base du $\R$-espace vectoriel $\LL(\C^S,\C)$. On a
$$
\forall n\in\S,\, dx_n= \frac12dz_n+\frac12d\overline{z}_n \et
dy_n= \frac1{2i}dz_n-\frac1{2i}d\overline{z}_n \
$$

Si $f:\C^S\simeq\R^{2N}\to\C$ est une fonction différentiable, et $z\in\C^S$, la différentielle de $f$ en $z$ se réécrit donc
$$
d_z f = \sum_{n}\frac12\left(\frac{\partial f}{\partial x_n}(z)-i\frac{\partial f}{\partial y_n}(z)\right)d z_n 
+ \sum_{n}\frac12\left(\frac{\partial f}{\partial x_n}(z)+i\frac{\partial f}{\partial y_n}(z)\right)d\overline{z}_n
$$
On convient de noter 
$$
\frac{\partial f}{\partial z_n}=\frac12\left(\frac{\partial f}{\partial x_n}-i\frac{\partial f}{\partial y_n}\right)
\et
\frac{\partial f}{\partial \overline{z}_n}=\frac12\left(\frac{\partial f}{\partial x_n}+i\frac{\partial f}{\partial y_n}\right)
$$
De façon à obtenir
$$
d_z f = \sum_{n}\frac{\partial f}{\partial z_n}(z)d z_n 
+ \sum_{n}\frac{\partial f}{\partial \overline{z}_n}(z)d\overline{z}_n
$$

Lorsque $f$ est polynomiale en les variables $x_n,y_n$, on peut la réécrire en tant que polynôme en $z_n,\overline{z}_n$ et les polynômes $\frac{\partial f}{\partial z_n}$ et $\frac{\partial f}{\partial \overline{z}_n}$ se calculent en dérivant partiellement \og comme si \fg\ les variables $z_n,\overline{z}_n,n\in S$ étaient toutes indépendantes. 

A titre d'exemple, considérons la fonction $f:\C^2\to \R$ définie par
$$
\forall (z_1,z_2)\in\C^2,\, f(z_1,z_2)=|z_1|^2 + 2\Re(z_1.z_2) = z_1.\overline{z}_1+z_1.z_2+\overline{z}_1.\overline{z}_2
$$
On a alors
$$
\frac{\partial f}{\partial z_1}=\overline{z}_1+z_2,\,
\frac{\partial f}{\partial \overline{z}_1}=z_1+\overline{z}_2,\,
\frac{\partial f}{\partial z_2}=z_1,\,
\frac{\partial f}{\partial \overline{z}_2}=\overline{z}_2
$$
et donc 
$$
d_z f= (\overline{z}_1+z_2)dz_1+(z_1+\overline{z}_2)d\overline{z}_1
+(z_1)dz_2+(\overline{z}_1)d\overline{z}_2
=2.\Re((\overline{z}_1+z_2)dz_1+(z_1)dz_2)
$$

Dans le cas de notre exemple simple de sous-variété algébrique, on a
$$
\PP=\{z\in\C,\, \frac12\Im(z^2)=0\}
,
$$
on a, pour $z\in\C^1$, $P(z)=\frac12\Im(z^2)= -i\frac14z^2+i\frac14\overline{z}^2$ et
$$
\frac{\partial f}{\partial z}(z)=-\frac12iz,\,\frac{\partial f}{\partial \overline{z}}(z)=+\frac12i\overline{z}
$$
d'où
$$
d_z P = -\frac12iz.dz +\frac12i\overline{z}.d\overline{z}=\Im(z.dz)
$$
Comme $P$ est un polynôme réel, $d_z P$ est une forme linéaire sur le $\R$-ev $\C^1$ et on a,
$$
\TT_z=\{t\in\C^1,\, 2\Im(z.t)=0\}
$$
Pour $z\in \PP$ fixé, le $\R$-sous espace vectoriel $\TT_z$ de $\C^1$ est de dimension $1$ si $z\not=0$ (points réguliers de $\PP$), de dimension $2$ si $z=0$ (le point singulier).

\section{Quelques preuves de lemmes}\label{sec:preuves-lemmes}

\subsection{Démonstration du lemme \oldref{lemme:prop-elem-D-M}.}
\label{sec:preuve-lemme:prop-elem-D-M}
\begin{enumerate}
\item Montrons que $D=-D^*$. On considère dans cette égalité $D:\C^S\to \C^\Sigma$ définie par
$$
\forall z\in\C^S,\, \forall \nu\in\Sigma,\, (D.z)_{\nu} = z_{\nu+\demi}-z_{\nu-\demi}
$$
et donc $D^*:\C^\Sigma \to \C^S$ et
$D:\C^\Sigma\to \C^S$ définie par
$$
\forall \zeta\in\C^\Sigma,\, \forall n\in S,\, (D.\zeta)_{n} = \zeta_{n+\demi}-\zeta_{n-\demi}
$$
Les applications $D$, $D^*$ et $D$ sont $\C$-linéaires.

Soit $\zeta\in\C^\Sigma$, $D^*.\zeta$ est caractérisé dans $\C^S$ par le fait que
$$
\forall z\in\C^S,\, \pdthermitien{D^*.\zeta}{z}= \pdthermitien{\zeta}{D.z} 
$$
Soit $z\in\C^S$. On a
\begin{eqnarray*}
  \pdthermitien{D^*.\zeta}{z}&=& \pdthermitien{\zeta}{D.z}\\
&=& \sum_{\nu\in\Sigma} \zeta_\nu.\overline{z_{\nu+\demi}-z_{\nu-\demi}}\\
&=& \sum_{\nu\in\Sigma} \zeta_\nu.\overline{z}_{\nu+\demi}-\sum_{\nu\in\Sigma} \zeta_\nu.\overline{z}_{\nu-\demi}\\ 
&=& \sum_{n\in S} \zeta_{n-\demi}.\overline{z}_{n}-\sum_{n\in S} \zeta_{n+\demi}.\overline{z}_{n}
\mathpunct{.}
\end{eqnarray*}
On a fait deux changements d'indices de sommation en posant respectivement $n=\nu+\demi$ dans la première somme et  $n=\nu-\demi$ dans la deuxième somme. Finalement on a
\begin{eqnarray*}
  \pdthermitien{D^*.\zeta}{z}
&=& \sum_{n\in S} (\zeta_{n-\demi}-\zeta_{n+\demi}).\overline{z}_{n}
= -\pdthermitien{D.\zeta}{z}
\end{eqnarray*}
et donc $D^*=-D$.

Le même type de calculs donne que $M^*=M$, toujours avec ce jeu du typage basé sur la considération des espaces d'arrivée et de départ.

\item On a, par définition, pour $z\in\C^S$, 
$$\pi_0.z = z-\frac1N\pdthermitien{z}{\carac}\carac$$
et donc
$\pi_0.\carac=0$.  Comme $D.\carac=0$, il vient, pour $z\in\C^S$, 
$$
D.pi_0.z = D.z-\frac1N\pdthermitien{z}{\carac}D.\carac=D.z
$$
et, comme, pour $\zeta\in\C^\Sigma$,
$$\pi_0.\zeta = \zeta-\frac1N\pdthermitien{\zeta}{\carac}\carac$$
alors, en utilisant le fait que $D^*=-D$,
$$
pi_0.D.z = D.z-\frac1N\pdthermitien{D.z}{\carac}\carac
= D.z+\frac1N\pdthermitien{z}{D.\carac}\carac
=D.z
\mathpunct{.}
$$
En résumé 
$$
\forall z\in\C^S,\, \pi_0.D.z = D.\pi_0.z =D.z
\mathpunct{,}
$$
ce qui est ce que l'on cherche.
\item De la même manière, on a $M.\carac = \carac$ et donc, pour $z\in\C^S$,
$$
\pi_0.M.z =
M.z-\frac1N\pdthermitien{M.z}{\carac}\carac
= M.z-\frac1N\pdthermitien{z}{M.\carac}\carac
= M.z-\frac1N\pdthermitien{z}{\carac}\carac
\mathpunct{,}
$$
$$
M.\pi_0.z =
M.z-\frac1N\pdthermitien{z}{\carac}M.\carac
= M.z-\frac1N\pdthermitien{z}{\carac}\carac
$$
et donc
$$
\pi_0.M.z = M.\pi_0.z 
\mathpunct{,}
$$
ce qui est annoncé.
\item Soit $z\in\C^S$, on a, pour $\nu\in\Sigma$,
$$
(D.z)_\nu=z_{\nu+\demi}-z_{\nu-\demi}
\et
(M.z)_\nu=\frac12(z_{\nu+\demi}+z_{\nu-\demi})
$$
et donc, pour $n\in S$,
\begin{eqnarray*}
(M.D.z)_n
&=&\frac12\left(
(D.z)_{n+\demi}+(D.z)_{n-\demi}
\right)\\
&=&\frac12\left(
(z_{(n+\demi)+\demi}-z_{(n+\demi)-\demi})
+
(z_{(n-\demi)+\demi}-z_{(n-\demi)-\demi})
\right)
=\frac12\left(
z_{n+1}-z_{n-1}
\right)\\
\end{eqnarray*}
et
\begin{eqnarray*}
(D.M.z)_n
&=& (M.z)_{n+\demi}-(M.z)_{n-\demi}\\
&=&\frac12\left(
(z_{(n+\demi)+\demi}+z_{(n+\demi)-\demi})
-
(z_{(n-\demi)+\demi}+z_{(n-\demi)-\demi})
\right)
=\frac12\left(
z_{n+1}-z_{n-1}
\right)
\end{eqnarray*}
et donc $M.D.z=D.M.z$ avec a formule annoncée.
\item (Formule de \ancetre{Leibniz}). Supposons $\alpha\in\C^S$. On a, pour $z\in\C^S$, pour $n\in S$,
$$
(\opdiag{\alpha}.z)_n=\alpha_n.z_n
\mathpunct{,}
$$
et donc, pour $\nu\in\Sigma$, d'une part
$$
(M.\opdiag{\alpha}.z)_\nu=\frac12\left(\alpha_{\nu+\demi}.z_{\nu+\demi}+\alpha_{\nu-\demi}.z_{\nu-\demi}\right)
\mathpunct{,}
$$
et d'autre part
\begin{eqnarray*}
(\opdiag{M.\alpha}.D.z)_\nu&=&(M.\alpha)_\nu.(z_{\nu+\demi}-z_{\nu-\demi})  
=\frac12.(\alpha_{\nu+\demi}+\alpha_{\nu-\demi}).(z_{\nu+\demi}-z_{\nu-\demi})  \\
(\opdiag{D.\alpha}.M.z)_\nu&=&\frac12(D.\alpha)_\nu.(z_{\nu+\demi}+z_{\nu-\demi})  
=\frac12.(\alpha_{\nu+\demi}-\alpha_{\nu-\demi}).(z_{\nu+\demi}+z_{\nu-\demi})  \\
(\opdiag{M.\alpha}.D.z)_\nu+
(\opdiag{D.\alpha}.M.z)_\nu&=&
\frac12\left(\alpha_{\nu+\demi}.z_{\nu+\demi}+\alpha_{\nu-\demi}.z_{\nu-\demi}\right)
\end{eqnarray*}
et donc, pour $z\in\C^S$, $\nu\in\Sigma$,
$$
(\opdiag{M.\alpha}.D.z)_\nu+
(\opdiag{D.\alpha}.M.z)_\nu=
(M.\opdiag{\alpha}.z)_\nu
$$
\ie 
$$
\opdiag{M.\alpha}.D.z+
\opdiag{D.\alpha}.M.z=
M.\opdiag{\alpha}.z
$$
\ie 
$$
\opdiag{M.\alpha}.D+
\opdiag{D.\alpha}.M=
M.\opdiag{\alpha}
$$
\end{enumerate}

\subsection{Démonstration du lemme \oldref{lemme:prop-I-K}.}
\label{sec:preuve-lemme:prop-I-K}
\begin{enumerate}
\item
Soit $\zeta\in\C^\Sigma$, $I^*.\zeta$ est caractérisé dans $\C^S$ par le fait que
$$
\forall z\in\C^S,\, \pdthermitien{I^*.\zeta}{z}= \pdthermitien{\zeta}{I.z} 
$$
Soit $z\in\C^S$. On a
\begin{eqnarray*}
  \pdthermitien{I^*.\zeta}{z}&=& \pdthermitien{\zeta}{I.z}
\mathpunct{.}
\end{eqnarray*} 
Maintenant, $z=\pi_0.z+\frac1N\pdthermitien{z}{\carac}.\carac$ et, il existe (un unique) $\chi\in\C_0^\Sigma$ tel que $\pi_0.z=D.\chi$. On a donc 
$$
I.z = \chi
$$
De façon symétrique, il existe (un unique) $w\in\C_0^S$, $w=I.\zeta$, tel que $\pi_0.\zeta=D.w$ et donc
\begin{eqnarray*}
  \pdthermitien{I^*.\zeta}{z}&=& \pdthermitien{\zeta}{\chi}\\
&=& \pdthermitien{D.w}{\chi}+\frac1N\pdthermitien{\zeta}{\carac}\underbrace{\pdthermitien{\carac}{\chi}}_{=0}\\
&=& \pdthermitien{w}{D^*.\chi}
=- \pdthermitien{w}{D.\chi}\\
&=&
-\pdthermitien{I.\zeta}{\pi_0.z}
=-\pdthermitien{I.\zeta}{z}
\mathpunct{.}
\end{eqnarray*} 
On a donc $I^*=-I$. La relation $K^*=-K$ est due au fait que
$$
K^*=(M.I)^*=(I.M)^*=M^*.I^*=-M.I=-K^*
$$
\item Par définition, $I$ et $D$ sont inverses sur $\carac^\bot$ et s'annulent  $\vect{\carac}$. On a donc $I.D=D.I=\pi_0$ car $\pi_0$ est l'identité sur $\carac^\bot$ et s'annule sur le supplémentaire $\vect{\carac}$.
\item En composant par $I$ à droite la formule de \ancetre{Leibniz}, on obtient
  $$
D.\opdiag{\alpha}.I = \opdiag{M.\alpha}.D.I+\opdiag{D.\alpha}.MI
$$
et donc
$$
D.\opdiag{\alpha}.I = \opdiag{M.\alpha}.\pi_0+\opdiag{D.\alpha}.K
$$
\item En passant à l'adjoint cette dernière identité, il vient
$$
(-I).\opdiag{\overline{\alpha}}.(-D) = \pi_0.\opdiag{M.\overline{\alpha}}+(-K).\opdiag{D.\overline{\alpha}}
$$
et donc, quitte à substituer, $\alpha$ à $\overline{\alpha}$
$$
I.\opdiag{\alpha}.D = \pi_0.\opdiag{M.\alpha}-K.\opdiag{D.\alpha}
$$

\end{enumerate}
\subsection{Démonstration du lemme \oldref{lemme:resolution-systeme}.}
\label{sec:lemme:resolution-systeme}

  Soient $n$ et $p$ deux entiers naturels et soit $A$ une matrice à
  coefficients complexes\footnote{Pour une matrice, ou un vecteur $A$ à coefficients complexes, on note
$\overline{A}$ la matrice ou le vecteur dont les entrées sont les
conjuguées des entrées de $A$.}, $p$ lignes, $n$ colonnes.
On considère les ensembles suivants 
\begin{eqnarray*}
  \TT&=&\{T\in\C^n,\,A.T+\overline{A}.\overline{T}=0\},\\
  \TT_0&=&\{(T,S)\in\C^n\times\C^n,\,A.T+\overline{A}.S=0\},\\
  \TT_R&=&\{(T,S)\in\C^n\times\C^n,\,A.T+\overline{A}.S=0,T=\overline{S}\},\\
  \TT_I&=&\{(T,S)\in\C^n\times\C^n,\,A.T+\overline{A}.S=0,T=-\overline{S}\},
\end{eqnarray*}
alors 
\begin{enumerate}
\item Il est clair que $\TT_0$ est un $\C$-sev de $\C^n\times\C^n$, c'est le noyau de l'application $\C$-linéaire 
$$
\begin{array}{ccc}
 \C^n\times\C^n &\to & \C^p \\
(T,S)&\mapsto & A.T+\overline{A}.S
\end{array}
$$
Dans la même ligne de raisonnement, les applications
$$
\begin{array}{ccc}
 \C^n\times\C^n &\to & \C^n \\
(T,S)&\mapsto & T-\overline{S}
\end{array}
\et
\begin{array}{ccc}
 \C^n\times\C^n &\to & \C^n \\
(T,S)&\mapsto & T+\overline{S}
\end{array}
$$
soint $\R$-linéaires (la conjugaison ne permet pas la $\C$-linéarité). Les ensembles $\TT_R$, resp. $\TT_I$, est intersection du noyau de la première, resp. de la seconde, et du $\R$-espace $\TT_0$.

Finalement,  $\TT$ est un $\R$-sev de
  $\C^n$ en tant que noyau de l'application $\R$ linéaire
$$
\begin{array}{ccc}
 \C^n &\to & \C^p \\
(T,S)&\mapsto & A.T+\overline{A}.\overline{T}
\end{array}
$$

\item $\TT_R$ et $\TT_I$ sont des $\R$-sous-espaces de $\TT_0$.
Définissons les applications $\R$-linéaires $\pi_{R//I},\pi_{I//R}:\TT_0\to \C^n\times\C^n$ par
$$
\forall (T,S)\in \TT_0\subset\C^n\times\C^n,\,
\pi_{R//I}(T,S)=\frac12(T+\overline{S},\overline{T}+S),\,
\pi_{I//R}(T,S)=\frac12(T-\overline{S},-\overline{T}+S).$$
En ajoutant et retranchant les équations, valables pour $(T,S)\in\TT_0$,
$$
A.T+\overline{A}.S=0 \et 
\overline{A}.\overline{T}+A.\overline{S}=0 
$$ 
On obtient que 
$$
\forall (T,S)\in \TT_0,\,
\pi_{R//I}(T,S)\in \TT_R \et
\pi_{I//R}(T,S)\in \TT_I
$$
et par ailleurs, on a clairement $\TT_R \cap \TT_I=\{0\}$ et 
$$
\forall (T,S)\in \TT_0,\,
(T,S)=\pi_{R//I}(T,S)+\pi_{I//R}(T,S)
$$
Donc $\TT_0 = \TT_R \oplus \TT_I$.

\item L'application $\TT_R\to\TT_I$, $(T,S)\mapsto (i.T,i.S)$ est clairement bien définie, $\R$-linéaire, bijective~: c'est   un isomorphisme de $\R$-espaces vectoriels.

L'application $\TT_R\to \TT$, $(T,S)\mapsto T$ est clairement bien définie, $\R$-linéaire, bijective~: c'est   un isomorphisme de $\R$-espaces vectoriels.
\item Les $\R$-espaces vectoriels $\TT$, $\TT_R$ et $\TT_I$ sont tous de même dimension (réelle) et, comme $\TT_0 = \TT_R \oplus \TT_I$, $\dim_\R \TT_0=\dim_\R \TT_R+\dim_\R \TT_I=2\dim_\R \TT$. Du fait que $\TT_0$ est aussi un $\C$-espace vectoriel, $\dim_\R \TT_0=2\dim_\C \TT_0$, ce qui entraine les égalité annoncées.
\item Si $J$ et $Q$ sont deux matrices telles que
$$\TT_0=\{(T,S)\in\C^n\times\C^n,T=J.S,Q.S=0\}$$
alors
\begin{eqnarray*}
  \TT_R&=&\pi_{R//I}(\TT_0)\\
&=& \{(T+\overline{S},\overline{T}+S)\in\C^n\times\C^n,\,\exists (T,S)\in\C^n\times \C^n,\,T=J.S,Q.S=0\}\\
&=& \{(J.S+\overline{S},\overline{J.S}+S)\in\C^n\times\C^n,\,\exists S\in\C^n,\,Q.S=0\}
\end{eqnarray*}
et, via la bijection entre $\TT$ et $\TT_R$, 
$$\TT=\{J.S+\overline{S},\,\exists S\in\C^n, Q.S=0\}=(J+C)(\kernel{Q})$$
Les égalités de dimensions précédentes donnent que
$$\dim_\R \TT=\dim_\R \TT_R=\dim_\C \TT_0$$
L'application $\C$-linéaire
$\left\{
\begin{array}{ccc}
 \kernel{Q} &\to & \C^n\times \C^n \\
S&\mapsto & (J.S,S)
\end{array}\right.
$
est injective, d'image $\TT_0$, on a donc 
$\dim_\C \TT_0= \dim_\C\kernel{Q}$
et finalement
$$\dim_\R \TT=\dim_\C\kernel{Q}
\mathpunct{.}
$$ 
\end{enumerate}

\bibliographystyle{halpha} 
\bibliography{Polygones}

\end{document}